\numberwithin{equation}{section}
\theoremstyle{plain}
\newtheorem{theorem}{Theorem}[section]
\newtheorem{defi}[theorem]{Definition}
\newtheorem{remark}[theorem]{Remark}
\newtheorem{lemma}[theorem]{Lemma}
\def\O{\Omega}
\def\cV{\mathcal{V}}
\def\cT{\mathcal{T}}
\def\cE{\mathcal{E}}
\def\mean#1{\left\{\hskip -5pt\left\{#1\right\}\hskip -5pt\right\}}
\def\jump#1{\left[\hskip -3.5pt\left[#1\right]\hskip -3.5pt\right]}
\def\smean#1{\{\hskip -3pt\{#1\}\hskip -3pt\}}
\def\sjump#1{[\hskip -1.5pt[#1]\hskip -1.5pt]}
\title{DBCP}
\author{sudiptoc506 }
\date{August 2023}
\begin{document}
	\title{Non-Conforming Finite Element Method For Constrained Dirichlet Boundary Control Problem}
	\author{ Sudipto Chowdhury
		\thanks{ Department of Mathematics, The LNM Institute of Information Technology Jaipur, Rajasthan- 302031, India (\tt{sudipto.choudhary@lnmiit.ac.in})} ~, ~~
		Divay Garg\thanks{Department of Mathematics, Indian Institute of Technology Delhi,
			New Delhi- 110016, India (\tt{divaygarg2@gmail.com})[The work of the author is granted by CSIR Research Fellowship] } }
	\maketitle
	
	\begin{abstract}
		\noindent
		This article examines the Dirichlet boundary control problem governed by the Poisson equation, where the control variables are square integrable functions defined on the boundary of a two dimensional bounded, convex, polygonal domain. It employs an ultra weak formulation and utilizes Crouzeix-Raviart finite elements to discretize the state variable, while employing piecewise constants for the control variable discretization. The study demonstrates that the energy norm of an enriched discrete optimal control is uniformly bounded with respect to the discretization parameter. Furthermore, it establishes an optimal order \emph{a priori} error estimate for the control variable. 
	\end{abstract}
	\par
	\noindent
	{\small{\bf Keywords:}}
	Optimal control problem, Crouziex-Raviart, Finite element method, A priori error analysis, Control constraints \\
	\noindent
	{\small{\bf AMS subject classification:} 65N30, 65N15, 65N12, 65K10 }
	\everymath{\displaystyle}
\section{Introduction}

We consider the control constrained Dirichlet boundary control problem on a convex and bounded polygonal domain $\O\subseteq \mathbb{R}^2$. We denote the polygonal boundary of the domain $\O$ by $\Gamma:=\cup_{j=1}^{k}\Gamma_{j}$, where $\Gamma_{j}$ is a straight line segment for all $1\leq j\leq k$. The minimization problem seeks a pair $(\bar{y},\bar{u})\in L_2(\O)\times U_{ad}$ such that
\begin{align}
J(\bar{y},\bar{u})=\min_{(y,u)\in L_2(\O)\times U_{ad}} J(y,u),
\end{align}
subject to the following second order elliptic equation:
\begin{align}\label{eqn:state}
\begin{cases}
-\Delta y&=f \quad \text{in} \quad \O,\\
\hspace{.4cm}y&=u \quad \text{on} \quad \Gamma.
\end{cases}
\end{align}
The given function $f\in L_2(\O)$ denotes the force which acts on the system externally and $U_{ad} \subseteq L_2(\Gamma)$ is the set of admissible controls which will be defined later. The quadratic cost functional 
 $J(\cdot,\cdot):L_2(\O)\times L_2(\Gamma)\rightarrow \mathbb{R}$ is defined in the following manner:
\begin{align}\label{cost:functional}
J(y,u):=\frac{1}{2}\int_{\O}|y-y_d|^2\,dx+\frac{\alpha}{2}\int_{\Gamma}|u|^2\,ds,
\end{align}
where $y_d\in L_2(\O)$ is the given desired state and $\alpha>0$ is known parameter which is introduced for the regularization. 
\noindent
The state equation \eqref{eqn:state} under the standard weak formulation is not well posed for $u\in L_2(\Gamma)$ as the weak solution of \eqref{eqn:state} lie in $H^1(\O)$ space and therefore by trace theorem \cite{brennerbook}, the first trace of it must lie in $H^{1/2}(\Gamma)$ which is not the case in this formulation. There are multiple techniques available in the literature to overcome this variational difficulty, one of which is to use the method of transposition which is also known as the ultra weak formulation \cite{casasraymond:2006,may:2013}. In brevity, we discuss the method of transposition. Define a test function space as follows:
\begin{align*}
X:=\{\phi \in H^1_0(\O):\Delta \phi \in L_2(\O)\}.
\end{align*}
By elliptic regularity theory on convex polygonal domains, we have $X=H^2(\O)\cap H^1_0(\O)$.
Now, we obtain the ultra weak formulation of the state equation \eqref{eqn:state}. Multiplying $\phi\in X$ in both sides of \eqref{eqn:state} and then integrate both the sides to find 
\begin{align*}
-\int_{\O}\Delta y\,\phi\,dx&=\int_{\O}f\,\phi\,dx.
\end{align*}
A use of integration by parts in the above equation yields 
\begin{align*}
\int_{\O}f\,\phi\,dx&=\int_{\O}\nabla y\cdot\nabla \phi\,dx-\int_{\Gamma}\phi\,\frac{\partial y}{\partial n}\,ds\\
&=\int_{\O}\nabla y\cdot\nabla \phi\,dx,
\end{align*}
where we have used the fact that $\phi \in H^1_0(\O)$ to get the last equality. The idea in the method of transposition is to apply the integration by parts once more. By doing so, we get
\begin{align*}
\int_{\O}f\,\phi\,dx&=-\int_{\O}y\,\Delta \phi\,dx+\int_{\Gamma}y\,\frac{\partial \phi}{\partial n}\,ds.
\end{align*}
Thus, the ultra weak formulation of \eqref{eqn:state} can be stated as: Given $f\in L_2(\O)$ and $u\in L_2(\Gamma)$, find $y\in L_2(\O)$ such that
\begin{align}\label{ultra:weak}
(y,\Delta \phi)&=-(f,\phi)+\langle u,\partial \phi/\partial n\rangle \quad \forall ~ \phi \in X,
\end{align}
where $(\cdot,\cdot)$ and $\langle \cdot,\cdot\rangle$ denotes the $L_2$ inner product on the domain $\O$ and its boundary $\Gamma$ respectively. Another approach to study second order Dirichlet boundary control problem is to 
transform the Dirichlet boundary condition into Robin's type boundary condition \cite{casasmateosraymond:2009}. Other method is to employ the energy space based approach in which the control is penalized in the energy space $H^{1/2}(\Gamma)$ \cite{chowdhurykkt,phan:2015}.  In this article the control variable is discretized with piecewise constant finite elements while the state and adjoint states are discretized using first-order non conforming Crouzeix Raviart finite elements.

In the finite element analysis for the Dirichlet boundary control problems addressed so far in the literature polynomial degrees used to discretize the state variable or the adjoint state variable and the control variable happens to be the same. But in this article we have discretized the control variable with piecewise constant finite elements whereas the state and adjoint variables are discretized using Crouzeix Raviart finite elements. Therefore the main challenge is to associate the discrete state variables with the discrete control variables via an invertible bounded linear operator such that the operator norm of the inverse remains uniformly bounded with respect to the discretization parameter $h>0$ (to be defined later).
In order to construct this operator in \eqref{asso opr} a mild restriction on the triangulation is imposed (discussed in Section 4 ).
This constitutes one of the main novelties of this article. Apart from this an enrichment operator is introduced satisfying the orthogonality described in Theorem \ref{Orthog1}, which plays a crucial role in the entire analysis. Moreover in Theorem \ref{lem2}, \ref{lem:1},\ref{supconv8}, \ref{sup-conv} and \ref{supconv2} several super convergence results are derived, which also contributes significantly to the novelty of this article.

\section{Continuous Problem}
In this segment, we discuss the variational setting of the investigated problem and derive the first order necessary optimality conditions. Before, we enter into the analysis part, we define the notations which are used throughout the article. The notations $H^m(\O)$, $H_0^m(\O)$, and $W^{m,p}(\O)$ represent the standard Sobolev spaces on $\O$, where $m$ is a non negative integer.  The Sobolev spaces of non integer order $s$, $H^s(\O)$ and $W^{s,p}(\O)$, are defined using interpolation, as explained in \cite{brennerbook}. On a Lipschitz domain, this definition is equivalent to the definition that uses double-integral norms, which is discussed in \cite[Theorem 14.2.3]{brennerbook}. The norms of $H^s(\Gamma)$ and $W^{s,p}(\Gamma)$, $0 \leq s \leq 1$ and $1 < p < \infty$, are defined on the boundary $\Gamma$ using charts, which is equivalent to using double-integral norms on $\Gamma$, according to \cite{brennerbook,Nezaetal}. The outwards unit normal vector to $\Gamma$ is denoted by $n$. The dual space of $H^{1/2}(\Gamma)$ is denoted by $H^{-1/2}(\Gamma)$ equipped with the operator norm $\|\cdot\|_{H^{-1/2}(\Gamma)}$ defined as:
\begin{align*}
\|\mu\|_{H^{-1/2}(\Gamma)}:=\sup\limits_{v\in H^{1/2}(\Gamma), v\neq 0}\frac{\langle \mu,v\rangle}{\|v\|_{H^{1/2}(\Gamma)}}, \quad \mu \in H^{-1/2}(\Gamma).
\end{align*}
Any generic member of $\mathbb{R}^2$ is denoted by $x=(x_1,x_2)$, where $x_1$, $x_2\in \mathbb{R}$. In the article, $C$ denotes a positive constant, which is generic in nature and does not depend on the solutions or the mesh-size.
\begin{remark}
When $v$ belongs to $H^2(\O)$, the trace of its gradient, denoted by $\nabla v|_{\Gamma}$, lies in $H^{1/2}(\Gamma)^2$. If $\Gamma$ has a smooth boundary, the outwards unit normal vector $n$ is continuous, allowing the definition of the normal derivative $\partial_n v = n\cdot\nabla v$ to be well defined. Thus $\partial_n v \in H^{1/2}(\Gamma)$ for $v \in H^2(\O)$ and the following estimate holds:
\begin{align*}
\|\partial _n v\|_{H^{1/2}(\Gamma)}\leq C\|v\|_{H^2(\O)}, \quad v\in H^2(\O).
\end{align*}
However, this estimate is not applicable for polygonal boundaries $\Gamma$, which are only Lipschitz continuous. Nonetheless, for $v \in H^2(\O)$, we still have $\partial_n v|_{\Gamma_j} \in H^{1/2}(\Gamma_j)$ for each straight component $\Gamma_j$, $1\leq j \leq k$ of $\Gamma$. To accommodate this, we introduce the space $\tilde{H}^{1/2}(\Gamma):=\{v \in L_2(\Gamma), v|_{\Gamma_j} \in H^{1/2}(\Gamma_j) \quad \forall~ 1\leq j \leq m\}$. We denote by $\tilde{H}^{-1/2}(\Gamma)$ the completion of $L_2(\Gamma)$ with respect to the following dual norm on $L_2(\Gamma)$
\begin{align*}
\|\mu\|_{\tilde{H}^{-1/2}(\Gamma)}:=\sup\limits_{v\in X\setminus\{0\}} \frac{\langle \mu,\partial_n v\rangle}{\|v\|_{H^2(\O)}}\leq \sup_{v\in \tilde{H}^{1/2}(\Gamma),v\neq 0}\frac{\langle \mu,v\rangle}{\|v\|_{\tilde{H}^{1/2}(\Gamma)}}.
\end{align*}
Note that $\tilde{H}^{-1/2}(\Gamma)$ is not in general the dual space of $\tilde{H}^{1/2}(\Gamma)$. In the case of a smooth boundary $\Gamma$, 
the mapping $\partial_n: X \rightarrow H^{1/2}(\Gamma)$ is surjective, so $\tilde{H}^{-1/2}(\Gamma)= H^{-1/2}(\Gamma)$.
\end{remark}
\noindent
For any $u\in \tilde{H}^{-1/2}(\Gamma)$, the harmonic extension of $u$ is denoted by $y_u\in L_2(\O)$ which satisfies the following equation:
\begin{align}\label{harmonic:extn}
(y_u,\Delta \phi)&=\langle u,\partial _n \phi\rangle \quad \forall ~ \phi \in X.
\end{align}
The function $u\in \tilde{H}^{-1/2}(\Gamma)$ is known as the ultra weak trace of $y_u\in L_2(\O)$ and the following trace estimate holds true:
\begin{align*}
\|u\|_{\tilde{H}^{-1/2}(\Gamma)}=\sup\limits_{\phi\in X\setminus\{0\}} \frac{(y_u,\Delta \phi)}{\|\phi\|_{H^2(\O)}}\lesssim \|y_u\|_{L_2(\O)}.
\end{align*}
Furthermore, if $u\in H^{1/2}(\Gamma)$, then $y_u\in H^1(\O)$ becomes the standard harmonic extension of $u$ such that it satisfies the following equation:
\begin{align*}
(\nabla y_u,\nabla \phi)=0 \quad \forall ~ \phi \in V.
\end{align*}
In this case $u\in H^{1/2}(\Gamma)$ is simply the trace of $y_u$ in the sense that $y_u|_{\Gamma}=u$.
\subsection{Optimality System}
In this section, we write the model Dirichlet boundary optimal control problem and derive the corresponding optimality system by applying the first order necessary optimality conditions. For $(f,u)\in L_2(\O)\times L_2(\Gamma)$, define $(y_f,y_u)\in L_2(\O)\times L_2(\O)$ to be the solutions of the following equations:
\begin{align}
(y_f,\Delta\phi)&=-(f,\phi)\quad \forall ~ \phi \in X,\label{vol:eqn}\\
(y_u,\Delta\phi)&=\langle u, \partial_n \phi\rangle \quad \forall ~ \phi \in X.\label{bdry:eqn}
\end{align}
In view of \eqref{vol:eqn}-\eqref{bdry:eqn}, the ultra weak solution $y\in L_2(\O)$ of \eqref{ultra:weak} can be written as $y=(y_f+y_u)$. In addition, if the given data satisfy $(f,u)\in L_2(\O)\times H^{1/2}(\Gamma)$, then we can recover the standard weak formulation, i.e. the following holds:
\begin{align}\label{eqn:weakstate}
\begin{cases}
y&=y_f+y_u,~ y_f\in H^1_0(\O),~ y_u\in H^1(\O),\\
y_u&=u \quad\text{on}~ \Gamma, \\
a(y_f,v)&=(f,v)-a(y_u,v)\quad \forall ~v\in H^1_0(\O),
\end{cases}
\end{align}
where $a(\cdot,\cdot):H^1(\O)\times H^1(\O)\rightarrow \mathbb{R}$ is a symmetric bilinear form given by:
\begin{align*}
a(v,w)&=\int_{\O}\nabla v\cdot \nabla w\,dx \quad \forall ~ v, w\in H^1(\O).
\end{align*}
The bilinear form $a(\cdot,\cdot)$ defines a continuous mapping on $H^1(\O)$ and coercive map on $H^1_0(\O)$. Set $V=H^1_0(\O)$ and $Q=H^1(\O)$. Given that $\O$ is a convex polygonal domain, for $u = 0$, the weak solution $y$ of \eqref{eqn:state} belongs to $H^2(\O)\cap H^1_0(\O)$. Additionally, it satisfies the following \emph{a priori} bound.
\begin{align*}
\|y\|_{H^2(\O)}\leq C \|f\|_{L_2(\O)}.
\end{align*}
The subsequent lemma affirms the well-posed-ness of the boundary value problem \eqref{eqn:state} in its ultra weak form, and as a particular instance, it ensures the presence of the ultra weak harmonic extension of the general boundary data $u \in \tilde{H}^{-1/2}(\Gamma)$.
\begin{lemma}\label{lem:existence}
For any given $u \in \tilde{H}^{-1/2}(\Gamma)$ and $f\in H^{-2}(\O)$, the state equation in its ultra weak form \eqref{ultra:weak} has a unique solution $y\in L_2(\O)$ such that the following \emph{a priori} estimate is satisfied:
\begin{align}\label{est:priori}
\|y\|_{L_2(\O)}\leq C\Big( \|f\|_{H^{-2}(\O)}+\|u\|_{\tilde{H}^{-1/2}(\Gamma)}\Big),
\end{align}
where $H^{-2}(\O)$ is the dual space of $X$.
\begin{proof}
For $f\in H^{-1}(\O)$ and $u \in H^{1/2}(\Gamma)$, there exists a unique weak solution $y\in H^1(\O)$ of \eqref{eqn:state} satisfying \eqref{eqn:weakstate}. An application of integration by parts in \eqref{eqn:weakstate} yields
\begin{align}\label{1.1}
-(y,\Delta \phi)&=(f,\phi)-\langle u,\partial_n\phi\rangle \quad \forall ~ \phi \in X,
\end{align}
which implies that $y$ is the solution of \eqref{ultra:weak}. We apply duality argument to prove the \emph{a priori} error estimate \eqref{est:priori}. Consider $z\in V$ to be the weak solution of the following Poisson problem:
\begin{align*}
\begin{cases}
-\Delta z&=y \quad \text{in} \quad \O,\\
\hspace{.4cm}z&=0 \quad \text{on} \quad \Gamma.
\end{cases}
\end{align*}
By elliptic regularity theory, $z\in H^2(\O)$ and $\|z\|_{H^2(\O)}\leq C \|y\|_{L_2(\O)}$. Thus, using \eqref{1.1} for $\phi=z$, we get
\begin{align*}
\|y\|^2_{L_2(\O)}&=(y,-\Delta z)\\
&=(f,z)-\langle u,\partial _n z\rangle,
\end{align*}
which on using Cauchy-Schwarz inequality yields \eqref{est:priori}. Since $H^{-1}(\O)\subseteq H^{-2}(\O)$ and $H^{1/2}(\Gamma)\subseteq \tilde{H}^{-1/2}(\Gamma)$ are dense, by density arguments, for $u\in \tilde{H}^{-1/2}(\Gamma)$ and $f\in H^{-2}(\O)$, the equation \eqref{ultra:weak} possesses a unique solution $y\in L_2(\O)$ satisfying the \emph{a priori} error estimate \eqref{est:priori}.
\end{proof}
\end{lemma}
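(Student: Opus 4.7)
The plan is to reduce the result to what is already known for the classical weak formulation and then promote it to the ultra weak setting by a duality + density argument. Concretely, I would first take regular data $(f,u)\in H^{-1}(\O)\times H^{1/2}(\Gamma)$, invoke the Lax--Milgram theorem for \eqref{eqn:weakstate} to obtain a unique $y\in H^1(\O)$, and then integrate by parts against any $\phi\in X$ to read off that $y$ satisfies \eqref{ultra:weak} (equivalently \eqref{1.1}). This gives existence for dense subsets of the target data spaces; the whole task is then to prove the quantitative estimate \eqref{est:priori} with a constant that involves only the weaker norms $\|f\|_{H^{-2}(\O)}$ and $\|u\|_{\tilde H^{-1/2}(\Gamma)}$.

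The estimate is obtained by an adjoint/duality argument. Given the solution $y\in L_2(\O)$, I would introduce the auxiliary Poisson problem $-\Delta z = y$ in $\O$, $z=0$ on $\Gamma$. Because $\O$ is convex and polygonal, elliptic regularity yields $z\in X=H^2(\O)\cap H^1_0(\O)$ together with $\|z\|_{H^2(\O)}\leq C\|y\|_{L_2(\O)}$. Testing \eqref{1.1} with $\phi=z\in X$ gives
\begin{align*}
\|y\|_{L_2(\O)}^2 = (y,-\Delta z) = (f,z) - \langle u,\partial_n z\rangle,
\end{align*}
and the two terms on the right are estimated directly using the respective dual-space definitions: $(f,z)\leq \|f\|_{H^{-2}(\O)}\|z\|_{H^2(\O)}$ and, by the very definition of $\|\cdot\|_{\tilde H^{-1/2}(\Gamma)}$ given in the preceding remark, $\langle u,\partial_n z\rangle\leq \|u\|_{\tilde H^{-1/2}(\Gamma)}\|z\|_{H^2(\O)}$. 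Combining with the regularity bound on $z$ and dividing by $\|y\|_{L_2(\O)}$ produces \eqref{est:priori}.

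Finally, I would extend from the smooth subclass to the full data space by density. The estimate \eqref{est:priori} shows that the solution map $(f,u)\mapsto y$ is a bounded linear operator from $H^{-1}(\O)\times H^{1/2}(\Gamma)$, equipped with the norm of $H^{-2}(\O)\times \tilde H^{-1/2}(\Gamma)$, into $L_2(\O)$. Since $H^{-1}(\O)$ is dense in $H^{-2}(\O)$ and $H^{1/2}(\Gamma)$ is dense in $\tilde H^{-1/2}(\Gamma)$ (the latter by construction as a completion), this operator extends continuously and uniquely, and passing to the limit in \eqref{ultra:weak} against an arbitrary fixed $\phi\in X$ is straightforward because $\Delta\phi\in L_2(\O)$ and $\partial_n\phi\in \tilde H^{1/2}(\Gamma)$. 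Uniqueness in the limit follows from \eqref{est:priori} applied to the difference of two candidate solutions with $f=0$, $u=0$.

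The main obstacle I anticipate is the bookkeeping around $\tilde H^{-1/2}(\Gamma)$ rather than any deep analysis: this space is defined as a completion of $L_2(\Gamma)$ and is not a standard dual, so the pairing $\langle u,\partial_n z\rangle$ must be interpreted through that completion, and the density of $H^{1/2}(\Gamma)$ needs to be used precisely to make the extension argument go through. Everything else is a clean application of convex-polygonal $H^2$-regularity and integration by parts.
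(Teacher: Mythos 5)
Your proposal is correct and follows essentially the same route as the paper's own proof: well-posedness of the weak formulation for regular data $(f,u)\in H^{-1}(\O)\times H^{1/2}(\Gamma)$, integration by parts to pass to the ultra weak form, a duality argument with the auxiliary problem $-\Delta z=y$ combined with convex-polygon $H^2$-regularity to obtain \eqref{est:priori}, and a density argument to extend to $H^{-2}(\O)\times \tilde H^{-1/2}(\Gamma)$. Your added remarks on interpreting the pairing through the completion and on uniqueness via the estimate for the difference of two solutions are consistent with, and slightly more explicit than, the paper's treatment.
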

\noindent
In light of the Lemma \ref{lem:existence}, we define the control to state map which maps the given data to the unique solution of \eqref{ultra:weak}.
\begin{defi}\label{control:to:state}
For $(f,u)\in L_2(\O)\times L_2(\Gamma)$, the solution map $S: L_2(\O)\times L_2(\Gamma)\rightarrow L_2(\O)$ is defined as $S(f,u)=y=(y_f+y_u)$ such that $y_f,y_u$ satisfy \eqref{vol:eqn} and \eqref{bdry:eqn}, respectively.
\end{defi}
\noindent
Now, we define the space of admissible controls $U_{ad}$ as follows: 
\begin{align*}
U_{ad}:=\{u\in L_2(\Gamma):u_a\leq u(x)\leq u_b ~~ \text{a.e} ~~ x\in \Gamma\},
\end{align*}
where $u_a$, $u_b$ $\in \mathbb{R}$ are such that $u_a<u_b$. The model Dirichlet boundary optimal control problem reads as: find $(\bar{y},\bar{u})\in L_2(\O)\times U_{ad}$ such that
\begin{align}\label{mocp}
J(\bar{y},\bar{u})=\min_{(y,u)\in L_2(\O)\times U_{ad}}J(y,u),
\end{align}
subject to the constraint $y=S(f,u)$, where $J(\cdot,\cdot)$ is the quadratic cost functional defined by \eqref{cost:functional}. Next, we present the following result that addresses the well-posedness of the underlying optimal control problem \eqref{mocp} and the corresponding first-order optimality system.
\begin{theorem}
The optimal control problem \eqref{mocp} possess a unique solution $(\bar{y},\bar{u})\in L_2(\O)\times U_{ad}$. Moreover the unique pair $(\bar{y},\bar{u})$ solves the following first order necessary optimality conditions:
\begin{align}
&\bar{y}=\bar{y}_f+y_{\bar{u}}, \quad \bar{y}_f,~ y_{\bar{u}} \in L_2(\O),\label{ctskkt:1}\\
&(\bar{y}_f,\Delta \phi)=-(f,\phi) \quad \forall ~ \phi \in X,\label{ctskkt:2}\\
&(y_{\bar{u}},\Delta \phi)=\langle \bar{u}, \partial_n \phi\rangle \quad \forall ~ \phi \in X,\label{ctskkt:3}\\
&(\bar{y}-y_d,y_u-y_{\bar{u}})+\alpha\langle\bar{u},u-\bar{u}\rangle\geq 0\quad \forall ~ u \in U_{ad}.\label{ctskkt:4}
\end{align}
\begin{proof}
By utilizing the solution operator $S(\cdot,\cdot)$, we can reformulate the model optimal control problem \eqref{mocp} into a simplified form, expressed as follows:
\begin{align}\label{red:mocp}
\min_{u\in U_{ad}}j(u),
\end{align}
where $j(\cdot):L_2(\Gamma)\rightarrow \mathbb{R}$ represents the reduced cost functional defined as:
\begin{align*}
j(u):=\frac{1}{2}\|S(f,u)-y_d\|^2_{L_2(\O)}+\frac{\alpha}{2}\|u\|^2_{L_2(\Gamma)}.
\end{align*}
The reduced cost functional $j(\cdot)$ defines a strictly convex functional on a non empty closed convex Hilbert space $U_{ad}$. Thus, the standard theory of PDE constrained optimal control problem \cite{trolzbook} implies that problem \eqref{red:mocp} has a unique solution say $\bar{u}\in U_{ad}$. Setting $\bar{y} = S(f, \bar{u})$ as the corresponding state variable. By definition of $S(\cdot,\cdot)$, we find that $\bar{y}\in L_2(\O)$ satisfies \eqref{ctskkt:1}-\eqref{ctskkt:3}. By applying the first order necessary optimality conditions, we get for any $u\in U_{ad}$
\begin{align*}
0\leq  j'(\bar{u})(u-\bar{u})&:=\lim_{t\downarrow 0}\frac{j(\bar{u}+t(u-\bar{u}))-j(\bar{u})}{t}\\
&=(\bar{y}-y_d,y_u-y_{\bar{u}})+\alpha\langle\bar{u},u-\bar{u}\rangle,
\end{align*}
which implies that $\bar{u}\in U_{ad}$ solves \eqref{ctskkt:4}. This completes the proof.
\end{proof}
\end{theorem}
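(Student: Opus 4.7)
The plan is to follow the standard reduction-to-reduced-functional strategy for linear-quadratic PDE-constrained control, using the solution map $S$ of Definition \ref{control:to:state} to eliminate the state variable, then invoking direct-method arguments on the reduced problem over $U_{ad}$, and finally differentiating to obtain the variational inequality.

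First I would fix $f \in L_2(\Omega)$ and define the reduced cost functional $j(u):=\tfrac{1}{2}\|S(f,u)-y_d\|_{L_2(\Omega)}^2+\tfrac{\alpha}{2}\|u\|_{L_2(\Gamma)}^2$, so that \eqref{mocp} is equivalent to minimizing $j$ over $U_{ad}$. Since the maps $u \mapsto y_u$ and $f \mapsto y_f$ defined by \eqref{vol:eqn}--\eqref{bdry:eqn} are linear, the map $u \mapsto S(f,u)$ is affine and, by the a priori bound \eqref{est:priori} from Lemma \ref{lem:existence}, continuous from $L_2(\Gamma)$ (which embeds continuously in $\tilde{H}^{-1/2}(\Gamma)$) into $L_2(\Omega)$. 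Consequently $j$ is continuous and convex on $L_2(\Gamma)$, and the regularization term $\tfrac{\alpha}{2}\|u\|_{L_2(\Gamma)}^2$ with $\alpha>0$ guarantees both strict convexity and coercivity.

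Next I would establish existence and uniqueness of $\bar{u}$: the set $U_{ad}$ is non-empty (it contains any constant in $[u_a,u_b]$), convex, bounded in $L_2(\Gamma)$, and closed (since pointwise a.e.\ inequalities pass to $L_2$-limits along a subsequence). By the direct method, any minimizing sequence is bounded, admits a weakly convergent subsequence with limit $\bar{u}\in U_{ad}$, and $j$ is weakly lower semicontinuous as a convex continuous functional on a Hilbert space; hence $\bar{u}$ is a minimizer, unique by strict convexity. Setting $\bar{y}=S(f,\bar{u})$ and decomposing $\bar{y}=\bar{y}_f+y_{\bar{u}}$ via \eqref{vol:eqn}--\eqref{bdry:eqn} yields \eqref{ctskkt:1}--\eqref{ctskkt:3}.

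Finally, for \eqref{ctskkt:4} I would compute the directional derivative of $j$ at $\bar{u}$ in the direction $u-\bar{u}$ for $u\in U_{ad}$. Because $S$ is affine with linear part $u\mapsto y_u$, one obtains
\begin{align*}
j'(\bar{u})(u-\bar{u}) = (\bar{y}-y_d,\,y_u-y_{\bar{u}}) + \alpha\langle \bar{u},\, u-\bar{u}\rangle,
\end{align*}
and the convexity of $U_{ad}$ combined with optimality of $\bar{u}$ forces this quantity to be non-negative for every admissible $u$, which is exactly \eqref{ctskkt:4}. I do not anticipate any serious obstacle here: the only points that require a little care are (i) verifying weak lower semicontinuity of the tracking term, which follows from weak continuity of $u\mapsto y_u$ via the bound \eqref{est:priori}, and (ii) justifying the limit in the difference quotient defining $j'(\bar{u})$, which reduces to the linearity and continuity of $S$.
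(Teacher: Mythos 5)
Your proposal is correct and follows essentially the same route as the paper: reduce to the functional $j(u)=\tfrac{1}{2}\|S(f,u)-y_d\|^2_{L_2(\O)}+\tfrac{\alpha}{2}\|u\|^2_{L_2(\Gamma)}$, obtain existence and uniqueness from strict convexity on the closed convex set $U_{ad}$, and derive \eqref{ctskkt:4} from the directional derivative $j'(\bar{u})(u-\bar{u})\geq 0$. The only difference is that you unpack the direct-method and weak lower semicontinuity details that the paper delegates to the standard theory in \cite{trolzbook}.
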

\subsection{Regularity of the Optimal Control} In this fragment, we obtain the regularity of the optimal variables namely $\bar{y}$ and $\bar{u}$ which are useful for the subsequent analysis. Introduce an adjoint state $\theta\in V$ such that it satisfies the following variational formulation:
find $\theta\in V$ such that:
\begin{align}\label{As1}
(\nabla v,\nabla \theta)=(\bar{y}-y_{d},v)\quad \forall~ v\in V.
\end{align}
By elliptic regularity theory for convex polygonal domain \cite{grisvardbook}, $\theta\in X$ and it satisfies the following \emph{a priori} estimate:
\begin{align*}
\|\theta\|_{H^2(\O)}&\leq C \|\bar{y}-y_d\|_{L_2(\O)}.
\end{align*}
Using integration by parts and density arguments in \eqref{As1}, we get
\begin{align}\label{As2}
(v,\Delta \theta)&=-(\bar{y}-y_d,v)\quad \forall~ v\in L_2(\O).
\end{align}
Since $\theta\in H^2(\O)$, it implies $\partial _n \theta \in \tilde{H}^{1/2}(\Gamma)$. For any $u\in U_{ad}$, using \eqref{As2} for $v=y_u-y_{\bar{u}}\in L_2(\O)$, we get
\begin{align*}
(\bar{y}-y_d,y_u-y_{\bar{u}})&=-(y_u-y_{\bar{u}},\Delta \theta)\\
&=-\langle u-\bar{u},\partial_n\theta\rangle,
\end{align*} 
where we have used \eqref{harmonic:extn} to observe the last equality. Hence, the inequality \eqref{ctskkt:4} reduces to 
\begin{align*}
\alpha \langle \bar{u},u-\bar{u}\rangle-\langle u-\bar{u},\partial_n\theta\rangle&\geq 0 \quad \forall ~ u\in U_{ad},
\end{align*}
which implies
\begin{align}\label{ctskkt:5}
\langle \alpha \bar{u}-\partial _n \theta,u-\bar{u}\rangle \geq 0 \quad \forall ~ u\in U_{ad}.
\end{align}
It follows from \eqref{ctskkt:5} and standard arguments from \cite[Chapter 2]{trolzbook} that
\begin{align}\label{ctskkt:6}
\bar{u}(x)=P_{[u_a, u_b]}\Big(\dfrac{1}{\alpha}(\partial _n \theta)(x)\Big),\quad \text{for almost every} ~ x\in \Gamma,
\end{align}
where $P_{[u_a, u_b]}(w):=\min\{u_b,\max\{u_a,w\}\}$ denotes the projection of $\mathbb{R}$ onto $[u_a,u_b]$. Since $\Omega$ is convex, therefore it follows from \eqref{ctskkt:6} that $\bar{u}\in \tilde{H}^{1/2}(\Gamma)$. As $\theta$ equals zero on $\Gamma$, the derivative of $\theta$ in the tangential direction also equals zero on $\Gamma$. Therefore, at the corner points of the domain denoted as $x$, we have $(\partial _n \theta)(x)=0$. Hence, it follows from \eqref{ctskkt:6} that $\bar{u}(x)=0$ at all the corner points $x$ of the domain, which implies that $\bar{u}\in H^{1/2}(\Gamma)$.

\section{Preliminaries }
We introduce the following notations which are used throughout the article.
\begin{itemize}
\item $\cT_h$ is a regular triangulation of $\O$ into closed triangles $T$.
\item $h_T:=$ diam ($T$) is the diameter of the triangle $T\in \cT_h$.
\item $h:=\max_{T\in \cT_h}h_T$ is the mesh parameter.
\item $\cV_h^i:=$ the set of all interior vertices of $\cT_h$.
\item $\cV_h^b:=$ the set of all boundary vertices of $\cT_h$.
\item $\cV_h:=\cV_h^i\cup \cV_h^b$ is the set of all vertices of $\cT_h$.
\item $\cE_h^i:=$ the set of all interior edges of $\cT_h$.
\item $\cE_h^b:=$ the set of all boundary edges of $\cT_h$.
\item $\cE_h:=\cE_h^i\cup \cE_h^b$ is the set of all edges of $\cT_h$.
\item $m_e:=$ mid point of $e\in \cE_h$.
\item $\mathbb{P}_k(T):=$ the set of all polynomials of degree atmost $k\in \mathbb{N}\cup \{0\}$ over $T\in \cT_h$.
\item $H^k(\O,\cT_h):=\{v\in L_2(\O):v|_{T}\in H^k(T) \quad \forall~ T\in \cT_h\}$, $k\in \mathbb{N}\cup \{0\}$.
\end{itemize} 
From this point forward, the notation $a\lesssim b$ indicates the existence of a positive constant $C$ that is not dependent on the mesh parameter $h$, such that a $a\leq Cb$. It is assumed that the triangulation $\cT_h$ is regular, meaning that any two distinct simplices in $\cT_h$ with non-empty intersection are either identical, or share exactly one common vertex, or one common edge. Additionally, every interior angle of any simplex is bounded from below by a universal positive constant $\rho$. All the generic constants hidden in the notation $\lesssim$ are solely determined by $\rho>0$, thereby ensuring the triangulation is shape regular.\\
Next, we will establish the definitions for the jump and average of functions that are scalar-valued and vector-valued. Consider an edge $e \in \cE_h^i$ that is shared by two adjacent triangles $T_{+}$ and $T_{-}$, such that $e = T_{+}\cap T_{-}$. Let $n_{+}$ be the unit normal of $e$ pointing from $T_{+}$ to $T_{-}$, and $n_{-}= -n_{+}$. In this context, we define the jumps $\sjump{\cdot}$ and averages $\smean{\cdot}$ across the edge $e$ as follows:
\begin{itemize}
\item For a scalar valued function $w\in H^1(\O,\cT_h)$, define
\begin{align*}
\sjump{w}:=w_{+}\,n_{+}+w_{-}\,n_{-}, \quad \quad \smean{w}:=\frac{w_{+}+w_{-}}{2},
\end{align*}
where $w_{\pm}=w|_{T_{\pm}}$.
\item For a vector valued function $v\in [H^1(\O,\cT_h)]^2$, define
\begin{align*}
\sjump{v}:=v_{+}\cdot n_{+}+v_{-}\cdot n_{-}, \quad \quad \smean{v}:=\frac{v_{+}+v_{-}}{2},
\end{align*}
where $v_{\pm}=v|_{T_{\pm}}$.
\end{itemize}
To simplify the notation, we also introduce the concepts of jump and mean on the boundary $\Gamma$. Consider any edge $e \in \cE_h^b$, where it is evident that there exists a triangle $T \in \cT_h$ such that $e = \partial T \cap \Gamma$. Let $n_e$ be the unit normal of $e$ pointing outward from $T$. For any $w \in H^1(\O,\cT_h)$ and any $v \in [H^1(\O,\cT_h)]^2$, we define the following on $e \in \cE_h^b$:
\begin{align*}
\sjump{w}&:=w\,n_e \quad \text{and} \quad \smean{w}=w,\\
\sjump{v}&:=v\cdot n_e \quad \text{and} \quad \smean{v}=v.
\end{align*}
Below, we define the Crouziex-Raviart finite element spaces. 
\begin{align*}
V_h&:=\{v_h\in L_2(\O):v_h|_{T}\in \mathbb{P}_1(T) ~ \forall ~ T\in \cT_h ~ \text{and} ~ v_h ~\text{is continuous at}~ m_e ~\forall ~e\in \cE_h \},\\
V_h^0&:=\{v_h\in V_h: v_h(m_e)=0 ~\forall ~ e\in \cE_h^b \}.
\end{align*}
\textbf{We need to modify the definition of $V_h$. The mean integral of the function should be continuous i.e DOF should be modified to mean integrals.}\\
The discrete bilinear form $a_{pw}(\cdot,\cdot):V_h\times V_h\rightarrow \mathbb{R}$ is defined by
\begin{align*}
a_{pw}(v_h,w_h)&=\sum_{T\in \cT_h}\int_{T}\nabla v_h|_T\cdot\nabla w_h|_T\,dx \quad \forall~ v_h,w_h\in V_h. 
\end{align*}
The mesh dependent bilinear form $a_{pw}(\cdot,\cdot)$ defines a symmetric bilinear form which coincides with the continuous bilinear form  $a(\cdot,\cdot)$ on $ H^1(\O)\times H^1(\O)$. Now, we define the mesh dependent norm on $V_h$. For any $v_h\in V_h$, define $\|\cdot\|_h$ as follows:
\begin{align*}
\|v_h\|_{h}:=\sqrt{a_{pw}(v_h,v_h)}.
\end{align*}
Note that, $\|\cdot\|_h$ defines only a semi-norm on $V_h$ but a norm on $V_h^0$. Next, we define a Crouziex-Raviart interpolation map $I_{CR}:V\cap C(\bar{\O})\rightarrow V_h^0$ as follows:
\begin{align}\label{CR;inter}
\int_{e}I_{CR}v\,ds=\int_{e}v\,ds\quad \forall ~ e\in \cE_h, ~ v\in V\cap C(\bar{\O}).
\end{align}
By density arguments, $I_{CR}$ can be extended continuously and uniquely to $V$, and the extension is again denoted by $I_{CR}$ for the ease of presentation. In the next lemma, we state the approximation properties of $I_{CR}$ \cite{CRetal}.
\begin{lemma}\label{lem:CRinterest}
For $v\in V$, it holds that:
\begin{align*}
\|v-I_{CR}v\|_{L_2(\O)}+h\|v-I_{CR}v\|_{h}&\lesssim h|v|_{H^1(\O)}.
\end{align*}
\end{lemma}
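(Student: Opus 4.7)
The plan is a standard element-wise Bramble--Hilbert plus scaling argument. On each triangle $T\in\cT_h$ the local operator $I_{CR}^T:H^1(T)\to\mathbb{P}_1(T)$ defined by $\int_e I_{CR}^T v\,ds=\int_e v\,ds$ for the three edges $e$ of $T$ is well defined because the edge-integral functionals form a unisolvent set of degrees of freedom on $\mathbb{P}_1(T)$; by construction it reproduces all of $\mathbb{P}_1(T)$, and in particular all constants. Continuity of these edge-integral functionals on $H^1(T)$ follows from the trace theorem, so $I_{CR}^T$ is a bounded linear map $H^1(T)\to\mathbb{P}_1(T)$.

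Next, I would pull back to a fixed reference triangle $\hat T$ via an affine map $F_T(\hat x)=B_T\hat x+b_T$ with $\|B_T\|\lesssim h_T$, $\|B_T^{-1}\|\lesssim h_T^{-1}$ and $|\det B_T|\asymp h_T^2$, where these estimates use the shape regularity of $\cT_h$. Since the CR interpolant commutes with affine pull-back, the map $\hat v\mapsto \hat v-\hat I_{CR}^{\hat T}\hat v$ is a bounded linear operator on $H^1(\hat T)$ that annihilates constants. The Bramble--Hilbert lemma then yields
\begin{align*}
\|\hat v-\hat I_{CR}^{\hat T}\hat v\|_{L_2(\hat T)}+|\hat v-\hat I_{CR}^{\hat T}\hat v|_{H^1(\hat T)}\lesssim |\hat v|_{H^1(\hat T)}.
\end{align*}
Transforming back to $T$ via the usual change-of-variables inequalities produces the element-wise bounds
\begin{align*}
\|v-I_{CR}^T v\|_{L_2(T)}\lesssim h_T\,|v|_{H^1(T)},\qquad |v-I_{CR}^T v|_{H^1(T)}\lesssim |v|_{H^1(T)}.
\end{align*}

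Finally, squaring these local estimates, summing over $T\in\cT_h$, and bounding $h_T\leq h$ gives the two global inequalities
\begin{align*}
\|v-I_{CR}v\|_{L_2(\O)}\lesssim h\,|v|_{H^1(\O)}\qquad\text{and}\qquad \|v-I_{CR}v\|_{h}\lesssim |v|_{H^1(\O)},
\end{align*}
whose combination is exactly the claimed estimate. The density argument already noted in the excerpt extends the inequality from $V\cap C(\bar\O)$ to all of $V$. The only step that is not entirely routine is verifying continuity of the edge-integral functionals as maps $H^1(\hat T)\to\mathbb{R}$, which is where the trace theorem on $\hat T$ enters; once that is in place, the rest is textbook finite-element interpolation theory, and I do not anticipate any essential obstacle.
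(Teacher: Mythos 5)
Your argument is correct and is exactly the standard Bramble--Hilbert plus affine-scaling proof of the Crouzeix--Raviart interpolation estimate; the paper itself gives no proof here but simply cites Crouzeix and Raviart, whose argument is the one you reproduce. Note only that since the edge-integral functionals are already continuous on $H^1(T)$ by the trace theorem, your local construction works directly for all $v\in V$ and the final density step is not actually needed.
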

\subsection{Raviart-Thomas Interpolation Operator}
In this segment, Raviart-Thomas interpolation operator is defined which is used further for the error analysis. Given a triangle $T\in \cT_h$, the lowest order Raviart-Thomas space is defined as:
\begin{align*}
RT_{0}(T):=\mathbb{P}_0(T)^2+x\,\mathbb{P}_0(T),
\end{align*} 
where $x=(x_1,x_2)\in \mathbb{R}^2$. 
\begin{remark}
Let $T\in \cT_h$ be any triangle. For any $v\in RT_{0}(T)$, we have $v\cdot n_{e}\in \mathbb{P}_{0}(e)$ for all $e\in \partial T$, where $n_{e}$ is the unit outward normal vector to $e$.
\end{remark}
\noindent
For any $T\in \cT_h$, define the local interpolation operator $\Pi_T: H^1(T)^2 \rightarrow RT_{0}(T)$ as follows:
\begin{align*}
\int_{e}\Pi_T v\cdot n_{i}\,p\,ds&=\int_{e}v\cdot n_{i}\,p\,ds \quad \forall ~ p\in \mathbb{P}_{0}(e), \quad \forall ~ e\in \partial T.
\end{align*}
\noindent
We state the approximation property of the interpolation operator $\Pi_T$ in the following lemma. 
\begin{lemma}\label{RTI}
Let $T\in \cT_h$ be any fixed triangle. For any $v\in H^1(T)^2$, the following holds:
\begin{align*}
\|v-\Pi_T v\|_{L_2(T)}&\lesssim h_T|v|_{H^1(T)}.
\end{align*}
\begin{proof}
We refer the readers to the article \cite{RTIntr} for the proof.
\end{proof}
\end{lemma}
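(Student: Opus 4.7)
The plan is to prove this by the classical Bramble--Hilbert plus scaling argument, which is the standard route for Raviart--Thomas interpolation estimates. The three ingredients I will assemble are: (i) polynomial preservation of $\Pi_T$ on constants, (ii) uniform boundedness of the analogous operator on a fixed reference triangle, and (iii) the correct scaling from the reference element back to $T$ via the Piola transform.

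First I would verify that $\Pi_T$ preserves $\mathbb{P}_0(T)^2 \subset RT_0(T)$, i.e.\ $\Pi_T p = p$ for every constant vector field $p$. This follows immediately from the defining moment conditions, since both $\Pi_T p$ and $p$ have the same integrals of their normal components against constants on each edge, and these degrees of freedom are unisolvent on $RT_0(T)$. Consequently, for every $p \in \mathbb{P}_0(T)^2$ one has
\begin{align*}
\|v - \Pi_T v\|_{L_2(T)} = \|(v-p) - \Pi_T(v-p)\|_{L_2(T)} \leq \|v-p\|_{L_2(T)} + \|\Pi_T(v-p)\|_{L_2(T)},
\end{align*}
so it remains to bound $\Pi_T$ acting on $H^1(T)^2$ modulo constants.

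Next I would pass to a fixed reference triangle $\hat{T}$ via the affine map $F_T(\hat{x}) = B_T \hat{x} + b_T$ and push $v$ forward using the contravariant Piola transform $\hat{v}(\hat{x}) = \det(B_T)\, B_T^{-1} v(F_T(\hat{x}))$, which is precisely the transformation that commutes with $\Pi$: one has $\widehat{\Pi_T v} = \hat{\Pi}_{\hat{T}} \hat{v}$. On $\hat{T}$ the operator $\hat{\Pi}_{\hat{T}} : H^1(\hat{T})^2 \to RT_0(\hat{T})$ is bounded because the degrees of freedom (edge averages of normal components) are continuous linear functionals on $H^1(\hat{T})^2$ by the trace theorem. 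Combining boundedness with the constant-preserving property and invoking the Bramble--Hilbert lemma yields
\begin{align*}
\|\hat{v} - \hat{\Pi}_{\hat{T}} \hat{v}\|_{L_2(\hat{T})} \lesssim |\hat{v}|_{H^1(\hat{T})}.
\end{align*}

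Finally I would scale back. Using shape regularity one has $\|B_T\| \lesssim h_T$, $\|B_T^{-1}\| \lesssim h_T^{-1}$, and $|\det B_T| \simeq h_T^2$, and the Piola transform gives the standard estimates
\begin{align*}
\|v\|_{L_2(T)} \simeq |\det B_T|^{-1/2} \|B_T \hat{v}\|_{L_2(\hat{T})}, \qquad |\hat{v}|_{H^1(\hat{T})} \lesssim |\det B_T|^{1/2}\, \|B_T^{-1}\|\, \|B_T\|\, |v|_{H^1(T)}.
\end{align*}
Substituting these into the reference estimate produces the claimed bound $\|v - \Pi_T v\|_{L_2(T)} \lesssim h_T |v|_{H^1(T)}$. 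The only mildly delicate point, and the one I would pay attention to, is keeping the powers of $h_T$ from the Piola transform correct, because the Piola scaling differs from ordinary pullback by a factor of $|\det B_T|\, B_T^{-1}$ and it is easy to lose or gain one power of $h_T$; once this bookkeeping is done carefully the estimate is immediate.
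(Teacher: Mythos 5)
Your proposal is correct, and the bookkeeping you were worried about comes out right: tracking the Piola scaling gives $\|B_T\|^2\|B_T^{-1}\|\lesssim h_T^2\cdot h_T^{-1}=h_T$, which is the claimed rate. The paper itself offers no argument here at all --- it simply cites Acosta and Dur\`an --- so any self-contained proof is by definition a different route. It is worth noting what separates the two. Your Bramble--Hilbert-plus-scaling argument relies on shape regularity (the minimum angle condition), since the bound $\|B_T^{-1}\|\lesssim h_T^{-1}$ degenerates as the smallest angle collapses; this is perfectly adequate for the present paper, which explicitly assumes every interior angle is bounded below by a universal constant $\rho$. The cited reference, by contrast, establishes the same estimate under the strictly weaker maximum angle condition, using an anisotropy-robust technique (edgewise error representations rather than a single reference-element compactness argument), which is why the authors point there instead of reproducing the classical proof. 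So your argument is the more elementary and standard one, and it suffices in this setting; the reference buys generality the paper does not actually need. One small point of hygiene: the well-definedness and boundedness of the edge functionals $v\mapsto\int_e v\cdot n\,ds$ genuinely requires $v\in H^1(T)^2$ (so that the full trace lies in $L_2(\partial T)$); for $H(\mathrm{div})$ fields alone these moments are not continuous functionals, and you correctly invoke the trace theorem at exactly this point.
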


%


\subsection{Enrichment Operator}
The focus of this section is on introducing a new enrichment operator which plays a crucial role in our upcoming analysis. It is well known that the enrichment operators play a crucial role in the analysis for non-conforming and discontinuous Galerkin finite element methods \cite{BrennerEnrich1, BrennerEnrich2, BrennerEnrich3, CarstensenEnrich1, CarstensenEnrich2, CarstensenEnrich3, karakashian, MaoEnrich}. These operators connect the non-conforming or discontinuous Galerkin spaces to it's conforming counterpart.\\
The novelty of the enrichment operator being introduced here is due to two facts. The degrees of freedom for the finite element functions of it's conforming counterparts are taken in a non-standard manner which enables us to prove an orthogonality result for the CR-finite element functions (Theorem \ref{Orthog1}). This result plays a crucial role in obtaining the optimal order energy norm error estimate for the solution of the state equation.\\
To begin with we modify the definitions of degrees of freedoms of the conforming counterpart $V_c$ of $V_h$.
\begin{align}
V_c=\{&v_h\in V: v_h|_T\in P_2(T), \; v_h|_+(p)=v_h|_-(p)\;\text{for}\; \;p\in\nu\; \text{and}\; \int_ev_h|_+ds=\int_ev_h|_-ds\;\notag\\ &\text{where}\; v_h|_+=v_h|_{T_+},\;v_h|_-=v_h|_{T_-},\; e\in\mathcal{E}_h\}.
\end{align}

Note that this choice of $V_c$ enables the Lagrange interpolation operator $I_L: H^2(\Omega)\rightarrow V_c$ to have the following orthogonal property:
\begin{align}
\int_e(v-I_Lv)ds=0,
\end{align}
which plays a crucial role to prove Theorem \ref{Orthog1}.


Note that, $V_c$ denotes the Lagrange finite element space of order $2$, which is the conforming counterpart of $V_h^0$. Let $I_{L}: V\cap C(\bar{\Omega})\rightarrow V_{c}$ denotes the Lagrange interpolation operator defined as follows: for $v\in V\cap C(\bar{\Omega})$,
\begin{align}
(I_L v)(p)&=v(p) \quad \forall ~ p \in \cV_h,\notag\\
\int_{e}I_L v\,ds&=\int_{e}v\,ds \quad \forall ~ e\in \cE_h.\label{inter:1}
\end{align}

Let $I_c:V_h^0\rightarrow V_c$ denotes the enriching map defined by: for $v_h\in V_h^0$, define
\begin{align}
I_c v_h(p)&=\frac{1}{|\cT_p|}\sum_{T\in \cT_p}v_h|_{T}(p) \quad \forall ~p\in \cV_h^i,\notag\\
I_c v_h(p)&=0 \quad \forall ~p\in \cV_h^b,\label{enrich:1}\\
\int_{e}I_c v_h\,ds&=\int_{e}v_h\,ds \quad \forall ~e\in \cE_h,\label{enrich:2}
\end{align}
where $\cT_p$ denotes the set of triangles sharing the node $p\in \cV_h$ and $|\cT_p|$ refers to the cardinality of $\cT_p$. 
\noindent
Subsequently, we prove an orthogonality result concerning $I_{c}$, which holds significant importance in the subsequent analysis.
\begin{theorem}\label{Orthog1}
Let $p_{h}$ be a piecewise linear function (need not be in $V_{h}$), it holds that:
	\begin{align*}
	a_{pw}(p_{h},v_{h}-I_cv_{h})=0\quad \forall~ v_{h}\in V_h^0. 
	\end{align*}
	\end{theorem}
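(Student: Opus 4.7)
My plan is to exploit the piecewise-constant nature of $\nabla p_h$, reduce the bilinear form via integration by parts on each triangle to a sum of edge integrals of $v_h - I_c v_h$, and then verify that every such integral vanishes by a combination of midpoint-exactness of the restriction of $v_h$ to an edge and the defining property \eqref{enrich:2} of the enrichment operator.

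Concretely, I would first fix $T \in \cT_h$ and set $c_T := \nabla p_h|_T \in \R^2$, which is a constant vector. Integration by parts (equivalently, the divergence theorem applied component-wise) on $T$ yields
$$\int_T \nabla p_h \cdot \nabla (v_h - I_c v_h)\,dx = c_T \cdot \int_{\p T}(v_h - I_c v_h)|_T\, n_T\,ds.$$
Summing over $T \in \cT_h$ and splitting each $\p T$ into its three edges gives
$$a_{pw}(p_h, v_h - I_c v_h) = \sum_{T \in \cT_h}\sum_{e \subset \p T}(c_T \cdot n_{T,e})\int_e (v_h - I_c v_h)|_T\,ds,$$
where $n_{T,e}$ denotes the (constant) outward unit normal to $e$ from $T$.

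Next I would show that each edge integral is zero. Because $v_h|_T$ is affine on $e$, the midpoint rule is exact, so $\int_e v_h|_T\,ds = |e|\,v_h|_T(m_e) = |e|\,v_h(m_e)$, where the last equality uses the continuity of $v_h$ at the midpoint $m_e$ built into the definition of $V_h$. In particular, $\int_e v_h|_T\,ds$ is independent of the triangle $T$ with $e \subset \p T$, so the quantity $\int_e v_h\,ds$ appearing in \eqref{enrich:2} is well defined. Invoking \eqref{enrich:2} gives $\int_e I_c v_h\,ds = \int_e v_h\,ds = |e|\,v_h(m_e)$, hence $\int_e (v_h - I_c v_h)|_T\,ds = 0$ for every interior edge. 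For boundary edges $e \in \cE_h^b$, the membership $v_h \in V_h^0$ forces $v_h(m_e) = 0$, and \eqref{enrich:2} still yields $\int_e I_c v_h\,ds = 0$, so the integral vanishes there as well.

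The only subtle point is the well-definedness of $\int_e v_h\,ds$, which relies precisely on piecewise-linearity of $v_h$ combined with midpoint continuity, so that the two one-sided integrals agree even though $v_h$ itself is discontinuous across $e$. Once this is noted, every term in the double sum above is individually zero and the theorem follows at once; I do not expect any further obstacle.
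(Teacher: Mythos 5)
Your proof is correct and follows essentially the same route as the paper's: element-wise integration by parts (exploiting that $\nabla p_h$ is piecewise constant), followed by the observation that every one-sided edge integral of $v_h - I_c v_h$ vanishes thanks to \eqref{enrich:2} together with the midpoint continuity and piecewise linearity of $v_h$. The only difference is bookkeeping — the paper regroups the element-boundary terms into the standard jump/average sums over edges before invoking \eqref{enrich:1} and \eqref{enrich:2}, whereas you keep the one-sided integrals and annihilate them individually, which has the minor virtue of making the well-definedness of $\int_e v_h\,ds$ explicit.
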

\begin{proof} 
Let $v_h\in V_h^0$ be an arbitrary function. It follows from triangle wise integration by parts that
\begin{align}\label{3.5}
		a_{pw}(p_{h},v_{h}-I_cv_{h})=&\sum_{e\in \cE_h}\int_{e}\mean{\dfrac{\partial p_h}{\partial n}}\jump{v_{h}-I_{c}v_{h})}ds \notag \\
		&+\sum_{e\in \cE_h^i}\int_{e}\jump{\dfrac{\partial p_{h}}{\partial n}}\mean{v_{h}-I_{c}v_{h}}ds.
\end{align}
Since, $p_h|_{T}\in \mathbb{P}_1(T)$ for all $T\in \cT_h$, the rest of the proof follows from using \eqref{enrich:1} and \eqref{enrich:2} in \eqref{3.5}.  
\end{proof}
\noindent

\section{Crouzeix-Raviart Analysis of Various State Equations}
This section considers the Crouzeix-Raviart (CR) finite element approximation of the solutions of certain auxiliary state equations. While some of these equations have a typical weak solution in $H^{1}(\Omega)$, others do not. The optimal order $L_{2}$ norm error estimates are derived for both of the cases. To begin with, we define the spaces as follows:
\begin{align*}
W_h&:=\{v_h\in H^1(\O):v_h|_{T}\in \mathbb{P}_1(T) ~\forall ~ T\in \cT_h\},\\
U_h&:=\{u_h\in L_2(\Gamma):u_h|_{e}\in \mathbb{P}_0(e) ~ \forall ~ e\in \cE_h^b\},\\
 V_{1}(\Gamma)&:=\{v_{h}\in C(\Gamma): v_{h}|_{e}\in \mathbb{P}_{1}(e)~ \forall ~ e\in \cE_h^b\}.
\end{align*}
Firstly, we consider the equation which possesses standard weak solution in $H^{1}(\Omega)$. Let $z_{h}\in V_{1}(\Gamma)$ be an arbitrary function. Since, $V_1(\Gamma)\subset H^{1/2}(\Gamma)$, thus $z_h \in H^{\frac{1}{2}}(\Gamma)$. Using $z_h$, we define a function say $\tilde{z}_{h}$ in the following manner:
\begin{align}\label{tildezh}
\begin{cases}
\tilde{z}_h(p)&=0\quad \forall ~p\in \cV_h^i,\\
\tilde{z}_h(p)&=z_h(p)\quad \forall ~p\in \cV_h^b.
\end{cases}
\end{align}
It is easy to see that $\tilde{z}_h\in W_h$. For any $z_h\in V_1(\Gamma)$, define $y_{z_{h}} \in H^1(\O)$ such that:
\begin{align}
y_{z_{h}}&=y_0+\tilde{z}_{h}, ~ y_0\in V,\label{cc:1}\\
a(y_0,v)&=-a(\tilde{z}_{h},v)\quad \forall ~ v\in V.\label{cc:2}
\end{align}
Let $y_{z_{h},h}\in V_h$ be the CR-approximation of $y_{z_{h}}\in H^1(\O)$, i.e. it satisfies the following equation:
\begin{align}
y_{z_{h},h}&={y}_{0,h}+\tilde{z}_{h}, ~ y_{0,h}\in V_h^0,\label{cc:3}\\
a_{pw}({y}_{0,h},v_h)&=-a_{pw}(\tilde{z}_{h},v_h)\quad \forall ~ v_h \in V_h^0.\label{cc:4}
\end{align}
By Lax-milgram lemma, auxiliary problems \eqref{cc:1}-\eqref{cc:2} and \eqref{cc:3}-\eqref{cc:4} are well posed.
%
%
%
The following Theorem yields an optimal order error estimate for $y_{z_{h}}$.:
\begin{theorem}\label{lem2}
For $z_{h}\in V_{1}(\partial\Omega)$ let $y_{z_{h}},\,y_{z_{h},h}$ satisfies \eqref{cc:1}, \eqref{cc:1} and \eqref{cc:3}, \eqref{cc:4} respectively. Then $\|y_{z_{h}}-y_{z_{h},h}\|\lesssim h[\|y_{z_{h}}-y_{z_{h},h}\|_{pw}+h^{1-\epsilon}\|z_{h}\|_{H^{\frac{3}{2}-\epsilon}(\partial\Omega)}]$ for any $\epsilon>0$.
\end{theorem}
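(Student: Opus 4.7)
The argument is an Aubin--Nitsche duality tailored to the CR nonconforming setting. Set $e_h := y_{z_h} - y_{z_h,h} = y_0 - y_{0,h}$ and let $w \in V \cap H^2(\Omega)$ solve $-\Delta w = e_h$ in $\Omega$ with $w|_\Gamma = 0$; elliptic regularity on the convex polygon gives $\|w\|_{H^2(\Omega)} \lesssim \|e_h\|_{L_2(\Omega)}$. Element-wise integration by parts produces
\begin{align*}
\|e_h\|_{L_2}^2 = a_{pw}(e_h, w) - \sum_{e \in \cE_h^i} \int_e \sjump{e_h}\cdot \nabla w\, ds - \sum_{e \in \cE_h^b} \int_e e_h\, \partial_n w\, ds.
\end{align*}
The continuity of $y_{z_h}$ on interior edges together with the condition $y_{0,h}(m_e)=0$ at boundary midpoints forces $\int_e \sjump{e_h}=0$ on each $e \in \cE_h^i$ and $\int_e e_h=0$ on each $e \in \cE_h^b$, so the smooth factors $\nabla w$ and $\partial_n w$ may be replaced by their edge-wise $L_2$-orthogonal complements to constants. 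Combining the standard CR jump bound $\sum_e h_e^{-1}\|\sjump{e_h}\|_{L_2(e)}^2 \lesssim \|e_h\|_{pw}^2$ with a trace inequality and the approximation $\|(I-P^0_e)g\|_{L_2(e)} \lesssim h_e^{1/2}\|g\|_{H^1(\omega_e)}$ bounds these two edge sums by $h\|e_h\|_{pw}\|w\|_{H^2}$.

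For the volume term, I split $a_{pw}(e_h, w) = a_{pw}(e_h, w - I_{CR}w) + a_{pw}(e_h, I_{CR}w)$. Lemma~\ref{lem:CRinterest} controls the first piece by $h\|e_h\|_{pw}\|w\|_{H^2}$. Since $w|_\Gamma = 0$ gives $I_{CR}w \in V_h^0$, the Galerkin-type identity $a_{pw}(y_{z_h,h}, v_h) = 0$ for $v_h \in V_h^0$ (an immediate consequence of \eqref{cc:4}) yields $a_{pw}(e_h, I_{CR}w) = a_{pw}(y_{z_h}, I_{CR}w)$. Since $y_{z_h}$ is harmonic (because $a(y_{z_h},v) = 0$ for $v \in V$), element-wise integration by parts gives
\begin{align*}
a_{pw}(y_{z_h}, I_{CR}w) = \sum_{e \in \cE_h}\int_e \smean{\partial_n y_{z_h}}\, \sjump{I_{CR}w}\, ds,
\end{align*}
where the jump of the normal derivative vanishes across interior edges because $y_{z_h}$ is a single-valued function on $\Omega$. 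The CR midpoint continuity together with $I_{CR}w(m_e)=0$ on $\Gamma$ forces $\int_e \sjump{I_{CR}w} = 0$ on every edge, permitting insertion of $(I - P^0_e)$ in front of $\smean{\partial_n y_{z_h}}$.

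The decisive regularity input is that $z_h \in V_1(\Gamma)$, being continuous and piecewise affine, satisfies $\|z_h\|_{H^{3/2-\epsilon}(\Gamma)} < \infty$; the harmonic extension on the convex polygon therefore obeys $\|y_{z_h}\|_{H^{2-\epsilon}(\Omega)} \lesssim \|z_h\|_{H^{3/2-\epsilon}(\Gamma)}$, so $\partial_n y_{z_h}|_e \in H^{1/2-\epsilon}(e)$. The fractional-order one-dimensional approximation estimate gives $\|(I - P^0_e)\smean{\partial_n y_{z_h}}\|_{L_2(e)} \lesssim h_e^{1/2-\epsilon}|\partial_n y_{z_h}|_{H^{1/2-\epsilon}(e)}$, while $\|\sjump{I_{CR}w}\|_{L_2(e)} = \|\sjump{w - I_{CR}w}\|_{L_2(e)} \lesssim h_e^{3/2}|w|_{H^2(\omega_e)}$ by CR interpolation combined with a trace inequality. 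Summing over edges produces a consistency contribution of order $h^{2-\epsilon}\|z_h\|_{H^{3/2-\epsilon}(\Gamma)}\|w\|_{H^2}$. Collecting all pieces and dividing by $\|e_h\|_{L_2} \gtrsim \|w\|_{H^2}^{-1}\cdot\|e_h\|_{L_2}^2$ produces the claimed bound. The principal technical obstacle is managing the fractional regularity $y_{z_h} \in H^{2-\epsilon}$ forced by the polygonal corner singularities---full $H^2$ is unavailable---and this is precisely what dictates the $h^{2-\epsilon}$ scaling of the boundary term and the need for fractional-order edge approximation.
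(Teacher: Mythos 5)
Your argument is correct and reaches the stated bound, but it is organized quite differently from the paper's proof. Both are duality arguments, yet the paper tests against a general $g\in L_2(\Omega)$, introduces \emph{both} the continuous dual solution $\phi_g$ and its discrete CR approximation $\phi_{g,h}$, splits $(g,y_0-\bar y_{0,h})$ into five terms, and controls the resulting consistency errors through the enrichment operator $I_c$ together with the orthogonality of Theorem \ref{Orthog1} and the Raviart--Thomas interpolant $\Pi_T$; it also derives the quasi-best-approximation bound \eqref{BestErEs} along the way. You instead run a classical Aubin--Nitsche/second-Strang argument with the single dual solution $w$, and you replace the enrichment-operator orthogonality by the elementary observation that $\sjump{e_h}$ and $\sjump{I_{CR}w}$ have vanishing edge averages (interior edges by the CR/mean-continuity of $y_{z_h,h}$ and of $I_{CR}$, boundary edges by $y_{0,h}(m_e)=0$ and $w\in H^1_0(\Omega)$), which licenses the insertion of $(I-P^0_e)$ in front of the smooth factors. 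This buys a shorter proof with no auxiliary discrete dual solution and no enrichment operator; what it costs is that your primal consistency term is phrased through edge-wise fractional seminorms $|\partial_n y_{z_h}|_{H^{1/2-\epsilon}(e)}$, whose summation over $\cE_h$ against $\|y_{z_h}\|_{H^{2-\epsilon}(\Omega)}$ requires some care (fractional seminorms do not localize additively over edges). The paper's route through $\Pi_T$ sidesteps this by working with element-wise quantities $\|\nabla y_{z_h}-\Pi_T\nabla y_{z_h}\|_{L_2(T)}\lesssim h_T^{1-\epsilon}|\nabla y_{z_h}|_{H^{1-\epsilon}(T)}$ and a discrete trace inequality, for which element-wise subadditivity is immediate; you can graft that device onto your argument without changing anything else, since $\Pi_T\nabla y_{z_h}\cdot n$ is constant on each edge and hence also absorbed by $(I-P^0_e)$. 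Both proofs rest on the same decisive regularity input $y_{z_h}\in H^{2-\epsilon}(\Omega)$ with $\|y_{z_h}\|_{H^{2-\epsilon}(\Omega)}\lesssim\|z_h\|_{H^{3/2-\epsilon}(\Gamma)}$, and your accounting of the powers of $h$ ($h\|e_h\|_{pw}$ from the interpolation and dual-consistency terms, $h^{2-\epsilon}\|z_h\|_{H^{3/2-\epsilon}(\Gamma)}$ from the primal consistency term) matches the paper's. The closing sentence about ``dividing by $\|e_h\|_{L_2}$'' is garbled as written; what you mean is $\|w\|_{H^2(\Omega)}\lesssim\|e_h\|_{L_2(\Omega)}$, after which one cancels a single factor of $\|e_h\|_{L_2(\Omega)}$.
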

\begin{proof}
	Next, we estimate $\|y_{z_h}-y_{z_h,h}\|$. Again, by duality, we find that
	\begin{align}\label{L_2e}
	\|y_{z_h}-y_{z_h,h}\|&=\sup_{g\in L_2(\O), g\neq 0}\frac{(y_{z_h}-y_{z_h,h},g)}{\|g\|}.
	\end{align}
	%
	Using \eqref{cc:1} and \eqref{cc:3}, we have 
	\begin{align}\label{cc:5}
	y_{z_h}-y_{z_h,h}&=y_0-\bar{y}_{0,h}.
	\end{align}
	For any $g\in L_2(\O)$, define $\phi_g\in H^1_0(\O)$ such that
	\begin{align}\label{cc:6}
	a(\phi_g,v)&=(g,v)\quad \forall ~ v\in H^1_0(\O).
	\end{align}
	By elliptic regularity theory, we have $\phi_g\in H^2(\O)$ such that $\|\phi_g\|_{2,\O}\lesssim \|g\|$.
	Let $\phi_{g,h}\in V_h^0$ be the standard non-conforming finite element approximation of $\phi_g$. Thus, it satisfies the following equation:
	\begin{align}\label{cc:7}
	a_{pw}(\phi_{g,h},v_h)&=(g,v_h)\quad \forall ~ v_h \in V_h^0.
	\end{align}
	Since $y_0\in H^1_0(\O)$ and $\bar{y}_{0,h}\in V_h$, it follows from \eqref{cc:2},\eqref{cc:4}, \eqref{cc:6} and \eqref{cc:7} that
	\begin{align}\label{cc:8}
	(g,y_0-\bar{y}_{0,h})&=a(\phi_g,y_0)-a_{pw}(\phi_{g,h},\bar{y}_{0,h})\notag\\
	&=a_{pw}(\phi_g-\phi_{g,h},y_0-\bar{y}_{0,h})+a_{pw}(\phi_g-\phi_{g,h},\bar{y}_{0,h})+a_{pw}(\phi_{g,h},y_0-\bar{y}_{0,h})\notag\\
	&=a_{pw}(\phi_g-\phi_{g,h},y_0-\bar{y}_{0,h})+a_{pw}(\phi_g,\bar{y}_{0,h})-(g,\bar{y}_{0,h})\notag\\
	&\quad +a_{pw}(\phi_{g,h},y_0)+a_{pw}(\tilde{z}_h,\phi_{g,h})\notag\\
	&=a_{pw}(\phi_g-\phi_{g,h},y_0-\bar{y}_{0,h})+a_{pw}(\phi_g,\bar{y}_{0,h}-y_0)-(g,\bar{y}_{0,h}-y_0)\notag\\
	&\quad + a_{pw}(y_0,\phi_{g,h}-\phi_g)+a_{pw}(\tilde{z}_h,\phi_{g,h}-\phi_g)\notag\\
	&=I+II+III+IV+V.
	\end{align}
	We estimate each term on the right hand side of \eqref{cc:8}. First, consider the term $IV+V$. 
	\begin{align*}
	IV+V&=a_{pw}(y_{\tilde{z}_{h}},\phi_{g,h}-\phi_g)\\
	\end{align*}


	Note that since $z_{h}$ is a piecewise linear and globally continuous on each line segment $\Gamma_{j}$ ($1\leq j\leq n$) consisting $\partial\Omega$. Therefore $z_{h}|_{\Gamma_{j}}\in H^{\frac{3}{2}-\epsilon}(\Gamma_{j})$ for any $\epsilon>0$ \cite{nonstandard interpolation}. Since $z_{h}$ satisfies the compatibility conditions that $z_{h}$ is continuous at the vertices of $\Omega$ and as $\Omega$ is convex therefore an application of result from real interpolation spaces \cite{brennerbook}  yields that $y_{z_{h}}\in H^{2-\epsilon}(\Omega)$  \cite{Nonstandard-Regularity}{Theorem 1.8}.
	Consider the term $a_{pw}(y_{\tilde{z}_{h}},\phi_{g,h}-\phi_g)$. A use of integration by parts yields 
	\begin{align}\label{csgd:1}
	a_{pw}(y_{\tilde{z}_{h}},\phi_{g,h}-\phi_g)&=\sum_{T\in \cT_h}\int_{T}\nabla y_{\tilde{z}_{h}}\cdot \nabla (\phi_{g,h}-\phi_g)\,dx\notag\\
	&=-\sum_{T\in \cT_h}\int_T\Delta y_{\tilde{z}_{h}}\,(\phi_{g,h}-\phi_g)\,dx+\sum_{T\in \cT_h}\int_{\partial T}\nabla y_{\tilde{z}_{h}}\cdot n\,(\phi_{g,h}-\phi_g)\,ds\notag\\
	&=-\sum_{T\in \cT_h}\int_T\Delta y_{\tilde{z}_{h}}\,(\phi_{g,h}-\phi_g)\,dx\notag\\
	& \quad+ \sum_{T\in \cT_h}\int_{\partial T}
	\frac{\partial{y}_{\tilde{z}_{h}}}{\partial n}\,(\phi_{g,h}-\phi_g)\,ds,
	\end{align}
	\begin{align}\label{Est-Enr}
	=&-\sum_{T\in \cT_h}\int_T\Delta y_{\tilde{z}_{h}}\,(\phi_{g,h}-\phi_g)\,dx\notag \\
	&+\sum_{T\in \cT_h}\int_{\partial T}\frac{\partial{y}_{\tilde{z}_{h}}}{\partial n} \,((\phi_{g,h}-\phi_g)-I_{CR}((\phi_{g,h}-\phi_g))\,ds+\notag\\&\sum_{T\in \cT_h}\int_{\partial T}\frac{\partial{y}_{\tilde{z}_{h}}}{\partial n} \,(I_{CR}(\phi_{g,h}-\phi_g)-I_cI_{CR}((\phi_{g,h}-\phi_g))ds+\sum_{T\in \cT_h}\int_{\partial T}\frac{\partial{y}_{\tilde{z}_{h}}}{\partial n} \,I_cI_{CR}((\phi_{g,h}-\phi_g))ds\notag \\
	=&-\sum_{T\in \cT_h}\int_T\Delta y_{\tilde{z}_{h}}\,(\phi_{g,h}-\phi_g)\,dx\notag \\
	&+\sum_{T\in \cT_h}\int_{\partial T}(\nabla{y}_{\tilde{z}_{h}}-\Pi_T \nabla{y}_{\tilde{z}_{h}})\cdot n\,((\phi_{g,h}-\phi_g)-I_{CR}((\phi_{g,h}-\phi_g))\,ds+\notag\\&\sum_{T\in \cT_h}\int_{\partial T}(\nabla{y}_{\tilde{z}_{h}}-\Pi_T\nabla{y}_{\tilde{z}_{h}} )\cdot n\,(I_{CR}(\phi_{g,h}-\phi_g)-I_cI_{CR}((\phi_{g,h}-\phi_g))ds+\notag\\
 &\sum_{e\in \cE_h}\int_{e}\frac{\partial{y}_{\tilde{z}_{h}}}{\partial n} \,\jump{I_cI_{CR}((\phi_{g,h}-\phi_g))}ds.
	\end{align}
 	Where $\Pi_T$ is lowest order Raviart-Thomas interpolation operator as in Lemma \ref{RTI}. 
 Note that $\Delta y_{\tilde{z}}=0$ and $\int_{e}\frac{\partial{y}_{\tilde{z}_{h}}}{\partial n} \,\jump{I_cI_{CR}((\phi_{g,h}-\phi_g))}ds=0$. Therefore application of Cauchy Schwarz inequality, discrete trace inequality, Crauzeix Raviart and Raviart Thomas interpolation error estimates in \eqref{Est-Enr} yields
 \begin{align}
     a_{pw}(y_{\tilde{z}_{h}},\phi_{g,h}-\phi_g)&=\lesssim h^{1-\epsilon}|y_{\tilde{z}_{h}}|_{2-\epsilon,\O}\|\phi_{g,h}-\phi_g\|_{pw}.\label{csdg:2}
 \end{align}
	Now, consider the terms $II+III$.
	\begin{align}\label{xoxo:14}
	II+III=&a_{pw}(\phi_g,\bar{y}_{0,h}-y_0)-(g,\bar{y}_{0,h}-y_0)\notag\\
	=&a_{pw}(\phi_g,(\bar{y}_{0,h}-y_0)-I_{CR}(\bar{y}_{0,h}-y_0))+a_{pw}(\phi_g, I_{CR}(\bar{y}_{0,h}-y_0)-I_c(I_{CR}(\bar{y}_{0,h}-y_0)))+\notag\\&a_{pw}(\phi_g,I_c(I_{CR}(\bar{y}_{0,h}-y_0)))-\quad(g,(\bar{y}_{0,h}-y_0)-I_{CR}(\bar{y}_{0,h}-y_0))-\notag\\&(g, I_{CR}(\bar{y}_{0,h}-y_0)-I_c(I_{CR}(\bar{y}_{0,h}-y_0)))-(g,I_c(I_{CR}(\bar{y}_{0,h}-y_0))).
	\end{align}
	From \eqref{xoxo:14} consider the term $a_{pw}(\phi_g,(\bar{y}_{0,h}-y_0)-I_{CR}(\bar{y}_{0,h}-y_0))$.
	\begin{align}\label{II-III-estimate}
	&a_{pw}(\phi_g,(\bar{y}_{0,h}-y_0)-I_{CR}(\bar{y}_{0,h}-y_0))\notag\\
	=&-\sum_{T\in \cT_h} \int_{T}\Delta\phi_{g}((\bar{y}_{0,h}-y_0)-I_{CR}(\bar{y}_{0,h}-y_0))\,dx+\sum_{T\in \cT_h}\int_{\partial T}\dfrac{\partial\phi_{g}}{\partial n}((\bar{y}_{0,h}-y_0)-I_{CR}(\bar{y}_{0,h}-y_0))\,ds\notag\\
	=&\sum_{T\in\cT_h}\int_{T}{g}((\bar{y}_{0,h}-y_0)-I_{CR}(\bar{y}_{0,h}-y_0))\,dx+\sum_{T\in \cT_h}\int_{\partial T}(\nabla\phi_g-(\Pi_T\nabla\phi_g))\cdot n_{e}\notag\\&((\bar{y}_{0,h}-y_0)-I_{CR}(\bar{y}_{0,h}-y_0))\,ds\notag\\\leq&\|g\|\|(\bar{y}_{0,h}-y_0)-I_{CR}(\bar{y}_{0,h}-y_0)\|+\sum_{e\in\mathcal{E}_h}\|(\nabla\phi_g-(\Pi_T\nabla\phi_g))\cdot n_{e}\|_{\partial T_e}\|(\bar{y}_{0,h}-y_0)-I_{CR}(\bar{y}_{0,h}-y_0)\|_{\partial T_e}
	\end{align}
    An application of the discrete trace-inequality and the CR interpolation error estimation in \eqref{II-III-estimate} yields,
    \begin{align}\label{II-III-estimate11}
        &a_{pw}(\phi_g,(\bar{y}_{0,h}-y_0)-I_{CR}(\bar{y}_{0,h}-y_0))\lesssim h\|g\|\|\bar{y}_{0,h}-y_0\|_{pw}.
    \end{align}
    Arguments in the similar line yields 
    \begin{align}\label{II-III-estimate12}
        a_{pw}(\phi_g, I_{CR}(\bar{y}_{0,h}-y_0)-I_c(I_{CR}(\bar{y}_{0,h}-y_0)))\lesssim h\|g\|\|\bar{y}_{0,h}-y_0\|_{pw}.
    \end{align}
    Combining \eqref{cc:6}, \eqref{xoxo:14}, \eqref{II-III-estimate}, \eqref{II-III-estimate11}, \eqref{II-III-estimate12} and standard $L_2$ norm error-estimates for $I_{CR}$ interpolation operator yields,
    \begin{align}\label{xoxo:15}
	II+III&\lesssim \|g\|\|\bar{y}_{0,h}-y_0\|_{pw}.
	\end{align}

	Next we aim to consider $\|y_{0,h}-y_0\|_{pw}$.\\
	From \eqref{cc:1}, \eqref{cc:2}, \eqref{cc:3}, Theorem \ref{Orthog1} and \eqref{cc:4} we obtain
	\begin{align}
	\|y_{0,h}-I_{CR}y_0\|^{2}_{pw}&\leq a_{pw}(I_{CR}y_0-y_{0,h}, I_{CR}y_0-y_{0,h})\notag\\
	&= a_{pw}(I_{CR}y_0, I_{CR}y_0-y_{0,h})- a_{pw}(y_{0,h}, I_{CR}y_0-y_{0,h})\notag\\
	&=a_{pw}(I_{CR}y_0-y_0, I_{CR}y_0-y_{0,h})+a_{pw}(y_0, I_{CR}y_0-y_{0,h})+\notag\\ &\quad \,\,a_{pw}(\tilde{P_{1}}(\bar{u}_h), I_{CR}y_0-y_{0,h}-I_c(I_{CR}y_0-y_{0,h}))+a_{pw}(\tilde{P_{1}}(\bar{u}_h),I_c(I_{CR}y_0-y_{0,h}))\notag\\
	&=a_{pw}(I_{CR}y_0-y_0, I_{CR}y_0-y_{0,h})+a_{pw}(y_0, I_{CR}y_0-y_{0,h})+a_{pw}(\tilde{P_{1}}(\bar{u}_h),I_c(I_{CR}y_0-y_{0,h}))\notag\\
	&=a_{pw}(I_{CR}y_0-y_0, I_{CR}y_0-y_{0,h})+a_{pw}(y_0, I_{CR}y_0-y_{0,h}-I_{c}(I_{CR}y_0-y_{0,h}))+\notag\\&\quad \,\,a_{pw}(y_0, I_{c}(I_{CR}y_0-y_{0,h}))+a_{pw}(\tilde{P_{1}}(\bar{u}_h),I_c(I_{CR}y_0-y_{0,h}))\notag\\&=a_{pw}(I_{CR}y_0-y_0, I_{CR}y_0-y_{0,h})+a_{pw}(y_0, I_{CR}y_0-y_{0,h}-I_{c}(I_{CR}y_0-y_{0,h}))\notag\\
	&=a_{pw}(I_{CR}y_0-y_0, I_{CR}y_0-y_{0,h})+a_{pw}(y_0-I_{CR}y_0, I_{CR}y_0-y_{0,h}-I_{c}(I_{CR}y_0-y_{0,h}))
	\label{Est-IV1}
	\end{align}
	Then applying the similar arguments as in \eqref{csgd:1}, \eqref{Est-Enr} and Young's inequality in the right hand side of \eqref{Est-IV1} we obtain 
	\begin{align}\label{BestErEs}
	\|y_{0,h}-I_{CR}y_0\|_{pw}\lesssim \|y_0-I_{CR}y_0\|_{pw}.
	\end{align}
 Finally combining \eqref{L_2e}, \eqref{cc:5}, \eqref{cc:8}, \eqref{csdg:2}, \eqref{xoxo:15} and \eqref{BestErEs} the result is obtained.
\end{proof}
{\bf Corollary}:   For $z_{h}\in V_{1}(\partial\Omega)$ let $y_{z_{h}},\,y_{z_{h},h}$ satisfies \eqref{cc:1}-\eqref{cc:2} and \eqref{cc:3}-\eqref{cc:4} respectively. Then $\|y_{z_{h}}-y_{z_{h},h}\|\lesssim h[\|y_{z_{h}}-y_{z_{h},h}\|_{pw}+\|z_{h}\|_{H^{\frac{1}{2}}(\partial\Omega)}]$.

We note that since we are considering triangulations $\mathcal{T}_h$ such that it induces an odd number of edges on $\partial\Omega$ and $dim ~V_1(\partial\Omega)=dim~ U_h$ therefore $P_0:V_1(\partial\Omega)\rightarrow U_h$ is bijective. Define $\tilde{P}_1:U_h\rightarrow V_1(\partial\Omega)$ by
\begin{align}\label{asso opr}
    \tilde{P}_1=P_0^{-1}.
\end{align}
Note that the matrix representation of $P_0: V_1(\partial\Omega)\rightarrow U_h$ is 
$$
\begin{bmatrix}
 1 & 0 & 0 & \cdot & \cdot & 0 \\
 1 & 1 & 0 & \cdot & \cdot & 0 \\
 0 & 1 & 1 & \cdot & \cdot & 0 \\
 \cdot & \cdot & \cdot & \cdot & \cdot & 0\\
 \cdot & \cdot & \cdot & 1 & 1 & 0\\
 0 & 0 & \cdot & 0 & 1 & 1
\end{bmatrix}
$$
Therefore if $\|\tilde{P}_1\|_{\mathcal{L}(U_h,V_1(\partial\Omega))}$ denotes the operator norm of $\tilde{P}_1$ subordinate to $L_2$ norm then it is uniformly bounded with respect to the mesh refinement parameter $h>0$ \cite{Ipsn}.
$i.\;e.$
\begin{align}\label{unifbdd1}
    \|\tilde{P}_1\|_{\mathcal{L}(U_h,V_1(\partial\Omega))}\leq C,
\end{align}
where $C>0$ is independent of $h>0$. This operator plays a central role in associating the discrete control variable with the discrete state variable, which is discussed in the next section.


\begin{theorem}\label{lem:1}
	For a piecewise constant function ${u}_h$ defined on $\partial\Omega$, the following holds:
	\begin{align*}
	\|y_{{u}_h}-{y}_{\tilde{P}_{1}(u_{h})}\|\lesssim {h}\|\tilde{P}_{1}{u}_h\|_{H^{\frac{1}{2}}(\partial\Omega)}..
	\end{align*}
	    \textcolor{red}{	
		\begin{align*}
		\|y_{{u}_h}-{y}_{P_{1}(u_{h})}\|\lesssim h.
		\end{align*}}

\end{theorem}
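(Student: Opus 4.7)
My plan is to reduce the $L_2(\O)$ bound to an $\tilde{H}^{-1/2}(\Gamma)$-estimate on the boundary datum $u_h-\tilde{P}_1(u_h)$ via the ultra-weak well-posedness, and then to extract the factor $h$ by exploiting the mean-value orthogonality encoded in $u_h=P_0(\tilde{P}_1(u_h))$ together with standard $L_2$-projection approximation on $\Gamma$.

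Setting $\xi:=y_{u_h}-y_{\tilde{P}_1(u_h)}$ and subtracting the two instances of \eqref{bdry:eqn} corresponding to boundary data $u_h$ and $\tilde{P}_1(u_h)$, I obtain
\[
(\xi,\Delta\phi)=\langle u_h-\tilde{P}_1(u_h),\,\partial_n\phi\rangle\quad\forall\,\phi\in X,
\]
so that $\xi$ is the ultra-weak harmonic extension of $u_h-\tilde{P}_1(u_h)\in L_2(\Gamma)\hookrightarrow\tilde{H}^{-1/2}(\Gamma)$. Applying Lemma~\ref{lem:existence} with $f=0$ yields
\[
\|\xi\|_{L_2(\O)}\lesssim \|u_h-\tilde{P}_1(u_h)\|_{\tilde{H}^{-1/2}(\Gamma)},
\]
so the task reduces to showing $\|u_h-\tilde{P}_1(u_h)\|_{\tilde{H}^{-1/2}(\Gamma)}\lesssim h\,\|\tilde{P}_1(u_h)\|_{H^{1/2}(\Gamma)}$.

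To bound the $\tilde{H}^{-1/2}$-norm, I unfold its definition as a supremum over $\phi\in X$ and aim to harvest two factors of $h^{1/2}$ in the numerator $\langle u_h-\tilde{P}_1(u_h),\partial_n\phi\rangle$. The first gain rests on the identity $P_0(\tilde{P}_1 u_h)=u_h$, which implies $\int_e(u_h-\tilde{P}_1(u_h))\,ds=0$ on every boundary edge $e\in\cE_h^b$; I therefore replace $\partial_n\phi$ by $\partial_n\phi-c_e$, where $c_e:=\tfrac{1}{|e|}\int_e\partial_n\phi\,ds$. A scaling/Poincar\'e argument applied to the lift of $\nabla\phi\cdot n_e$ to the triangle $T_e$ adjacent to $e$ (legitimate since $e$ lies on a straight segment of $\Gamma$, so $n_e$ is constant) gives
\[
\|\partial_n\phi-c_e\|_{L_2(e)}\lesssim h_e^{1/2}\|\phi\|_{H^2(T_e)}.
\]
Edge-wise Cauchy--Schwarz and summation then produce $\langle u_h-\tilde{P}_1(u_h),\partial_n\phi\rangle\lesssim h^{1/2}\,\|u_h-\tilde{P}_1(u_h)\|_{L_2(\Gamma)}\,\|\phi\|_{H^2(\O)}$. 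The second factor of $h^{1/2}$ comes from the standard $L_2$-projection approximation estimate $\|(I-P_0)v\|_{L_2(\Gamma)}\lesssim h^{1/2}\|v\|_{H^{1/2}(\Gamma)}$ applied to $v=\tilde{P}_1(u_h)\in V_1(\Gamma)\subset H^{1/2}(\Gamma)$; noting $(I-P_0)\tilde{P}_1(u_h)=\tilde{P}_1(u_h)-u_h$, this yields the second $h^{1/2}$ and closes the bound.

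The main obstacle I anticipate is the correct use of the definition $\tilde{P}_1=P_0^{-1}$: one must verify that the mean-value orthogonality on each boundary edge holds, which amounts to recognising $P_0$ (up to a normalising scalar) as the $L_2$-projection of $V_1(\Gamma)$ onto $U_h$. Once this orthogonality is in place and the polygonal geometry is used to ensure that the unit normal on each boundary edge is constant, the argument assembles from routine scaling and interpolation estimates.
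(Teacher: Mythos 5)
Your proposal is correct and follows essentially the same route as the paper: both reduce the $L_2(\O)$ error to a duality pairing $\langle u_h-\tilde{P}_1(u_h),\partial_n\phi\rangle$ with $\|\phi\|_{H^2(\O)}\lesssim\|g\|$, then use $u_h=P_0(\tilde{P}_1 u_h)$ and the $L_2$-orthogonality of $P_0$ to replace $\partial_n\phi$ by $\partial_n\phi-P_0(\partial_n\phi)$ (your edge means $c_e$ are exactly $P_0(\partial_n\phi)$), and finally harvest one factor of $h^{1/2}$ from each of the two projection errors. Phrasing the duality step through Lemma~\ref{lem:existence} and the $\tilde{H}^{-1/2}(\Gamma)$ norm rather than the explicit adjoint problem is only a cosmetic difference.
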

\begin{proof}
	Since ${u}_h\in L_2(\Gamma)$, $y_{{u}_h}\in L_2(\O)$ is the solution of the following ultra weak formulation:
	\begin{align}\label{c:2}
	(y_{u_h},\Delta \phi)&=-\langle {u}_h,\partial \phi/\partial n\rangle \quad \forall~ \phi \in X.
	\end{align}
	Note that $\tilde{P}_{1}({u}_h)\in H^{1/2}(\Gamma)$ and $y_{\tilde{P}_{1}(\bar{u}_h)}\in L_2(\O)$ also satisfies
	\begin{align}\label{c:1}
	(y_{\tilde{P}_{1}({u}_h)},\Delta \phi)&=-\langle \tilde{P}_{1}({u}_h),\partial \phi/\partial n\rangle \quad \forall ~ \phi \in X. 
	\end{align}
	By duality, we have
	\begin{align}\label{c:5}
	\|y_{{u}_h}-y_{\tilde{P}_{1}({u}_h)}\|&=\sup_{g\in L_2(\O),g\neq 0}\frac{(g,y_{{u}_h}-y_{\tilde{P}_{1}({u}_h)})}{\|g\|}.
	\end{align}

	For any $\phi \in X$, it follows from \eqref{c:2}, \eqref{c:1} that 
	\begin{align}
	(y_{\tilde{P}_{1}({u}_h)}-y_{{u}_h},\Delta \phi)&=\langle {u}_h- \tilde{P}_{1}({u}_h),\partial \phi/\partial n\rangle \notag\\
	&= \langle P_0(\tilde{P}_{1}{u}_h)- \tilde{P}_{1}({u}_h),\partial \phi/\partial n-P_{0}(\partial \phi/\partial n)\rangle.\label{c:3}
	\end{align} 
	We know that for any $g\in L_2(\O)$, there exists $\phi \in X$ such that $-\Delta \phi=g ~ \text{in}~ \O$ and since $\Omega$ is convex from elliptic regularity theory $\|\phi\|_{H^{2}(\Omega)}\lesssim \|g\|$. Thus, it follows from \eqref{c:3} and Cauchy-Schwarz inequality that, 
	\begin{align}
	(y_{{u}_h}-y_{\tilde{P}_{1}({u}_h)},g)&=\langle {u}_h- \tilde{P}_{1}({u}_h),\partial \phi/\partial n-P_{0}(\partial \phi/\partial n)\rangle\notag\\
	&=\langle P_0(\tilde{P}_{1}{u}_h)- \tilde{P}_{1}({u}_h),\partial \phi/\partial n-P_{0}(\partial \phi/\partial n)\rangle\notag\\
	&\lesssim \|P_0(\tilde{P}_{1}{u}_h)-\tilde{P}_{1}{u}_h\|_{0,\Gamma}\|\partial \phi/\partial n-P_{0}(\partial \phi/\partial n)\|_{0,\Gamma}.\label{c:4}
	\end{align}
	Then combining \eqref{c:5}, \eqref{c:4} and the elliptic regularity estimate for $\phi$ we obtain
	\begin{align}\label{c:8}
	\|y_{{u}_h}-y_{P_{1}({u}_h)}\|&\lesssim {h}\|\tilde{P}_{1}{u}_h\|_{H^{\frac{1}{2}}(\partial\Omega)}.
	\end{align}

	\end{proof}
\begin{theorem}\label{supconv8}
	For a piecewise constant function $u_{h}$ defined on $\partial\Omega$ it's extension $y_{u_{h}}$ and it's CR approximation $y_{u_{h},h}$ satisfies the following estimate:
	\begin{align*}
	\|y_{u_{h}}-y_{u_{h},h}\|\lesssim h.
	\end{align*}
\end{theorem}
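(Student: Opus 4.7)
The plan is to reduce the estimate to two already-established results: Theorem \ref{lem:1}, which compares $y_{u_h}$ to the extension $y_{\tilde{P}_1(u_h)}$ of its piecewise linear associate, and Theorem \ref{lem2} (together with its corollary), which controls the CR error for piecewise linear boundary data. The bridge between them is the identification of $y_{u_h,h}$ with $y_{\tilde{P}_1(u_h),h}$.

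First I would verify that $y_{u_h,h} = y_{\tilde{P}_1(u_h),h}$. By construction $y_{\tilde{P}_1(u_h),h} \in V_h$ satisfies the homogeneous CR interior equation $a_{pw}(y_{\tilde{P}_1(u_h),h},v_h)=0$ for every $v_h\in V_h^0$ (see \eqref{cc:3}--\eqref{cc:4}), and on each boundary edge $e$ its midpoint value equals
$$y_{\tilde{P}_1(u_h),h}(m_e) = \tilde{P}_1(u_h)(m_e) = \frac{1}{|e|}\int_e \tilde{P}_1(u_h)\,ds = P_0\bigl(\tilde{P}_1(u_h)\bigr)\big|_e = u_h\big|_e,$$
using that $\tilde{P}_1(u_h)$ is affine on $e$ and that $P_0\circ\tilde{P}_1 = \mathrm{id}$ on $U_h$. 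These are exactly the defining conditions one would impose on the CR approximation $y_{u_h,h}$, so the two functions must coincide.

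With this identification in hand, the triangle inequality gives
$$\|y_{u_h}-y_{u_h,h}\|\leq \|y_{u_h}-y_{\tilde{P}_1(u_h)}\| + \|y_{\tilde{P}_1(u_h)}-y_{\tilde{P}_1(u_h),h}\|.$$
Theorem \ref{lem:1} bounds the first summand by $h\,\|\tilde{P}_1(u_h)\|_{H^{1/2}(\partial\Omega)}$, while the corollary to Theorem \ref{lem2} controls the second by $h\bigl[\|y_{\tilde{P}_1(u_h)}-y_{\tilde{P}_1(u_h),h}\|_{pw} + \|\tilde{P}_1(u_h)\|_{H^{1/2}(\partial\Omega)}\bigr]$. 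The remaining CR energy-norm term is handled via the regularity $y_{\tilde{P}_1(u_h)}\in H^{2-\epsilon}(\Omega)$ for every $\epsilon>0$, which is available from the compatibility conditions enjoyed by any $z_h\in V_1(\partial\Omega)\subset C(\Gamma)$ together with the convexity of $\Omega$ (the same argument appears inside the proof of Theorem \ref{lem2}); a standard Strang-type estimate for the CR element then yields $\|y_{\tilde{P}_1(u_h)}-y_{\tilde{P}_1(u_h),h}\|_{pw}\lesssim h^{1-\epsilon}$, which is absorbed into the overall $O(h)$ estimate (the implicit constant depending on $u_h$ through $\|\tilde{P}_1 u_h\|_{H^{1/2}(\partial\Omega)}$).

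The principal obstacle I anticipate is the identification step, since the article has not spelled out explicitly how $y_{u_h,h}$ is defined for piecewise constant boundary data. One has to pin down an unambiguous definition (midpoint values on boundary edges equal to $u_h|_e$, together with the homogeneous CR interior equations) and invoke uniqueness of the resulting discrete problem with prescribed boundary midpoint values in order to legitimately reduce to the piecewise linear case. Once this bookkeeping is settled, the remaining assembly from Theorem \ref{lem:1} and the corollary to Theorem \ref{lem2} is routine.
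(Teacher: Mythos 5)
Your proposal is correct and follows essentially the same route as the paper, whose entire proof is the one-line remark that the result follows from Theorems \ref{lem2} and \ref{lem:1}; you have simply spelled out the triangle-inequality splitting through $y_{\tilde{P}_1(u_h)}$ and the identification $y_{u_h,h}=y_{\tilde{P}_1(u_h),h}$ (which is in fact built into the paper's definition of the discrete state in \eqref{diskkt:1}--\eqref{diskkt:3}). Your caveat that the hidden constant depends on $\|\tilde{P}_1 u_h\|_{H^{1/2}(\partial\Omega)}$ is also the right reading of the statement, since the paper only establishes the uniform bound on that quantity later, for the discrete optimal control.
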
	
\begin{proof}
		 Applying Theorems \ref{lem2}, \ref{lem:1} the proof follows .
\end{proof}	
\begin{theorem}\label{aux1}
	The following estimate holds:
	\begin{align*}
	\|y_{P_0\bar{u}}-y_{\tilde{P}_1P_0\bar{u}}\|\lesssim h\|\bar{u}\|_{H^{\frac{1}{2}}(\partial\Omega)}.
	\end{align*}
\end{theorem}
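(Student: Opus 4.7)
The plan is to mimic the duality argument used in the proof of Theorem \ref{lem:1}, exploiting the linearity of the ultra-weak harmonic extension map $u\mapsto y_u$ together with the defining property $P_0\tilde{P}_1 = I_{U_h}$, and then to control a boundary-type $L_2$ projection error by the $H^{1/2}(\Gamma)$ norm of $\bar{u}$.

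First, by linearity, $y_{P_0\bar{u}} - y_{\tilde{P}_1 P_0\bar{u}} = y_{P_0\bar{u} - \tilde{P}_1 P_0\bar{u}}$, and a duality argument gives
\begin{align*}
\|y_{P_0\bar{u}} - y_{\tilde{P}_1 P_0\bar{u}}\|_{L_2(\O)} = \sup_{g\in L_2(\O),\,g\neq 0}\frac{(y_{P_0\bar{u} - \tilde{P}_1 P_0\bar{u}},g)}{\|g\|}.
\end{align*}
For each $g$, let $\phi\in X$ solve $-\Delta\phi = g$ in $\O$ with $\phi = 0$ on $\Gamma$; by elliptic regularity on convex polygonal domains $\|\phi\|_{H^2(\O)}\lesssim \|g\|$. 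Using \eqref{harmonic:extn}, exactly as in \eqref{c:3}--\eqref{c:4}, the pairing becomes
\begin{align*}
(y_{P_0\bar{u}-\tilde{P}_1P_0\bar{u}},g) = -\langle P_0\bar{u} - \tilde{P}_1P_0\bar{u},\partial_n\phi\rangle.
\end{align*}

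The key observation is that $P_0(P_0\bar{u} - \tilde{P}_1P_0\bar{u}) = P_0\bar{u} - P_0\tilde{P}_1P_0\bar{u} = P_0\bar{u} - P_0\bar{u} = 0$, so $P_0\bar{u} - \tilde{P}_1P_0\bar{u}$ is $L_2(\Gamma)$-orthogonal to the space $U_h$ of edgewise constants. Hence we may freely subtract $P_0(\partial_n\phi)$:
\begin{align*}
|\langle P_0\bar{u} - \tilde{P}_1P_0\bar{u},\partial_n\phi\rangle| = |\langle P_0\bar{u} - \tilde{P}_1P_0\bar{u},\partial_n\phi - P_0(\partial_n\phi)\rangle|.
\end{align*}
Applying Cauchy--Schwarz and the standard $L_2$-projection estimate on the polygonal boundary gives $\|\partial_n\phi - P_0(\partial_n\phi)\|_{L_2(\Gamma)} \lesssim h^{1/2}\|\phi\|_{H^2(\O)} \lesssim h^{1/2}\|g\|$.

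It remains to show $\|P_0\bar{u} - \tilde{P}_1P_0\bar{u}\|_{L_2(\Gamma)} \lesssim h^{1/2}\|\bar{u}\|_{H^{1/2}(\Gamma)}$. The plan is to split
\begin{align*}
\|P_0\bar{u} - \tilde{P}_1P_0\bar{u}\|_{L_2(\Gamma)} \leq \|\bar{u} - P_0\bar{u}\|_{L_2(\Gamma)} + \|\bar{u} - \tilde{P}_1P_0\bar{u}\|_{L_2(\Gamma)}.
\end{align*}
The first term is bounded by $h^{1/2}\|\bar{u}\|_{H^{1/2}(\Gamma)}$ by standard approximation theory. For the second term, I will use that $\tilde{P}_1P_0$ is a projection onto $V_1(\Gamma)$ (since $\tilde{P}_1P_0 v_h = v_h$ for all $v_h\in V_1(\Gamma)$ by uniqueness of $P_0^{-1}$) and that it is $L_2$-stable with constant independent of $h$ by \eqref{unifbdd1}. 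Consequently, for any $v_h\in V_1(\Gamma)$,
\begin{align*}
\|\bar{u} - \tilde{P}_1P_0\bar{u}\|_{L_2(\Gamma)} = \|(\bar{u}-v_h) - \tilde{P}_1P_0(\bar{u}-v_h)\|_{L_2(\Gamma)} \lesssim \|\bar{u}-v_h\|_{L_2(\Gamma)},
\end{align*}
and choosing $v_h$ to be a Scott--Zhang type quasi-interpolant of $\bar{u}$ in $V_1(\Gamma)$ yields $\|\bar{u}-v_h\|_{L_2(\Gamma)} \lesssim h^{1/2}\|\bar{u}\|_{H^{1/2}(\Gamma)}$.

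Combining the two $h^{1/2}$ factors produces the stated $h\|\bar{u}\|_{H^{1/2}(\Gamma)}$ bound. The main obstacle is the last step, namely turning the purely algebraic uniform $L_2$-bound \eqref{unifbdd1} on $\tilde{P}_1$ into an $L_2$ \emph{approximation} estimate for $\tilde{P}_1 P_0\bar{u}$ in terms of $\|\bar{u}\|_{H^{1/2}(\Gamma)}$; the projection property of $\tilde{P}_1P_0$ onto $V_1(\Gamma)$ is what makes this possible without needing a direct $H^{1/2}$-stability argument.
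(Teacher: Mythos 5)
Your proposal is correct, and its core is the same as the paper's: a duality argument with the auxiliary Poisson problem, followed by the key orthogonality step that uses $P_0\tilde{P}_1=I_{U_h}$ (equivalently, the paper's substitution $P_0\bar{u}=P_0v_1$ with $v_1=\tilde{P}_1P_0\bar{u}$ and $P_0^2=P_0$) to insert $P_0(\partial_n\phi)$ for free, yielding the product of two $h^{1/2}$ factors. The only genuine divergence is in how the factor $\|P_0\bar{u}-\tilde{P}_1P_0\bar{u}\|_{L_2(\Gamma)}\lesssim h^{1/2}\|\bar{u}\|_{H^{1/2}(\Gamma)}$ is obtained: the paper writes this quantity as $\|v_1-P_0v_1\|_{L_2(\Gamma)}\lesssim h^{1/2}\|\tilde{P}_1P_0\bar{u}\|_{H^{1/2}(\Gamma)}$ and then proves the $H^{1/2}$-stability bound $\|\tilde{P}_1P_0\bar{u}\|_{H^{1/2}(\Gamma)}\lesssim\|\bar{u}\|_{H^{1/2}(\Gamma)}$ in \eqref{l2est3} via an intermediate interpolant and an inverse inequality, whereas you exploit that $\tilde{P}_1P_0$ is a uniformly $L_2$-stable projection onto $V_1(\Gamma)$ (by \eqref{unifbdd1}) and run a best-approximation argument $\|(I-\tilde{P}_1P_0)(\bar{u}-v_h)\|\lesssim\|\bar{u}-v_h\|$ with a quasi-interpolant $v_h$. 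Your route avoids the inverse inequality and the explicit $H^{1/2}$-stability estimate, which is slightly cleaner for this theorem; the paper's \eqref{l2est3} has the advantage that the $H^{1/2}$-stability of $\tilde{P}_1P_0$ is reused elsewhere (e.g.\ in Theorem \ref{supconv2} and in the final control estimate), so it is not wasted effort there.
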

\begin{proof}
	The main ingredients of the proof consists of orthogonality properties of the $L_2$ projection operator and bijection of $\tilde{P}_{1}$. Note that since we consider the triangulation such that it induces an odd numbers of edges on $\partial\Omega$ therefore given ${P}_{0}\bar{u}\in U_h$ there exists an unique $v_1\in V_1(\partial\Omega)$ such that
	\begin{align}\label{bijection}
	{P}_{0}\bar{u}=P_0v_1.
	\end{align}
    Note that 
	\begin{align}\label{l2defn}
	\|y_{P_0\bar{u}}-y_{\tilde{P}_{1}P_0\bar{u}}\|=\sup_{g\in L_2(\Omega), g\neq 0}\dfrac{(y_{P_0\bar{u}}-y_{\tilde{P}_{1}P_0\bar{u}},g)}{\|g\|}.
	\end{align}
	Then consider the auxiliary problem:
	\begin{align}\label{ax1}
	-\Delta\phi&=g~\text{in}~\Omega,\\
	\phi&=0~\text{on}~\partial\Omega.\notag
	\end{align}
	Considering $(y_{P_0\bar{u}}-y_{\tilde{P}_{1}P_0\bar{u}},g)$, \eqref{ax1}, $P_0^2=P_0$ along with integration by parts implies
	\begin{align}\label{l2est2}
	(y_{\tilde{P}_{1}P_0\bar{u}}-y_{P_0\bar{u}},g)&=(\tilde{P}_{1}P_0\bar{u}-P_0\bar{u},\dfrac{\partial\phi}{\partial n})\notag\\&=(\tilde{P}_{1}P_0v_1-P_0v_1,\dfrac{\partial\phi}{\partial n})\notag\\&=(v_1-P_0v_1,\dfrac{\partial\phi}{\partial n})\notag\\&=(v_1-P_0v_1,\dfrac{\partial\phi}{\partial n}-P_0\dfrac{\partial\phi}{\partial n}).
	\end{align}
	Therefore combination of \eqref{l2defn}, \eqref{bijection} and \eqref{l2est2} implies that 
	\begin{align}\label{ax2}
	\|y_{P_0\bar{u}}-y_{\tilde{P}_{1}P_0\bar{u}}\|\lesssim h\|\tilde{P}_{1}P_0\bar{u}\|_{H^{\frac{1}{2}}(\partial\Omega)}.
	\end{align}
	Next we show that $\|\tilde{P}_{1}P_0\bar{u}\|_{H^{\frac{1}{2}}(\partial\Omega)}\lesssim \|\bar{u}\|_{H^{\frac{1}{2}}(\partial\Omega)}$. Consider
	\begin{align}\label{l2est3}
	\|\tilde{P}_{1}P_0\bar{u}\|_{H^{\frac{1}{2}}(\partial\Omega)}&\leq\|\tilde{P}_{1}P_0(\bar{u}-\pi_1\bar{u})\|_{H^{\frac{1}{2}}(\partial\Omega)}+\|\tilde{P}_{1}P_0(\pi_1\bar{u})\|_{H^{\frac{1}{2}}(\partial\Omega)}\notag\\&\leq Ch^{-\frac{1}{2}}\|\tilde{P}_{1}P_0(\bar{u}-\pi_1\bar{u})\|+\|\pi_1\bar{u}\|_{H^{\frac{1}{2}}(\partial\Omega)}\notag\\&\leq Ch^{-\frac{1}{2}}\|P_0(\bar{u}-\pi_1\bar{u})\|+\|\pi_1\bar{u}\|_{H^{\frac{1}{2}}(\partial\Omega)}\notag\\&\leq Ch^{-\frac{1}{2}}\|\bar{u}-\pi_1\bar{u}\|+\|\bar{u}\|_{H^{\frac{1}{2}}(\partial\Omega)}\notag\\&\leq C\|\bar{u}\|_{H^{\frac{1}{2}}(\partial\Omega)}.
	\end{align}
	Therefore combining \eqref{l2defn}, \eqref{l2est2}, \eqref{ax2} and \eqref{l2est3} the result is obtained.
\end{proof}

\begin{theorem}\label{supconv2}
	Let $z\in H^{\frac{1}{2}}(\partial\Omega)$, then the following holds:
	\begin{align*}
	\|y_{z}-y_{P_0(z),h}\|\lesssim h\|z\|_{H^{1/2}(\Gamma)}.
	\end{align*}
\end{theorem}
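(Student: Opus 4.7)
My plan is to insert two intermediate harmonic extensions and split the error by the triangle inequality,
\begin{align*}
\|y_z - y_{P_0(z),h}\| &\leq \|y_z - y_{P_0(z)}\| + \|y_{P_0(z)} - y_{\tilde{P}_1 P_0(z)}\| \\
&\quad + \|y_{\tilde{P}_1 P_0(z)} - y_{\tilde{P}_1 P_0(z),h}\|,
\end{align*}
where, following the convention of Theorem \ref{supconv8}, the discrete solution $y_{P_0(z),h}$ is identified with $y_{\tilde{P}_1 P_0(z),h}$, namely the CR approximation defined by \eqref{cc:3}--\eqref{cc:4} with the piecewise-linear boundary datum $\tilde{P}_1 P_0(z) \in V_1(\Gamma)$.

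For the first piece I would run an $L_2(\Omega)$ duality argument. Given $g \in L_2(\Omega)$, let $\phi \in X$ solve $-\Delta\phi = g$ in $\O$ with $\phi|_\Gamma = 0$; convex-polygon elliptic regularity yields $\|\phi\|_{H^2(\Omega)} \lesssim \|g\|$. Since both $y_z$ and $y_{P_0(z)}$ satisfy the ultra-weak identity \eqref{harmonic:extn}, integration by parts gives
\begin{align*}
(y_z - y_{P_0(z)}, g) = -\langle z - P_0(z),\, \partial_n\phi\rangle = -\langle z - P_0(z),\, \partial_n\phi - P_0(\partial_n\phi)\rangle,
\end{align*}
the last equality by $L_2$-orthogonality of $P_0$. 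Edgewise Bramble--Hilbert estimates then yield $\|z - P_0(z)\|_{L_2(\Gamma)} \lesssim h^{1/2}\|z\|_{H^{1/2}(\Gamma)}$ and $\|\partial_n\phi - P_0(\partial_n\phi)\|_{L_2(\Gamma)} \lesssim h^{1/2}\|\partial_n\phi\|_{\tilde{H}^{1/2}(\Gamma)} \lesssim h^{1/2}\|g\|$, so Cauchy--Schwarz and the supremum over $g$ produce $\|y_z - y_{P_0(z)}\| \lesssim h\|z\|_{H^{1/2}(\Gamma)}$.

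The second piece is Theorem \ref{aux1} applied with $\bar u = z$, giving $\|y_{P_0(z)} - y_{\tilde{P}_1 P_0(z)}\| \lesssim h\|z\|_{H^{1/2}(\Gamma)}$ directly. For the third piece, I would apply the Corollary to Theorem \ref{lem2} with $z_h = \tilde{P}_1 P_0(z) \in V_1(\Gamma)$, bound the broken $H^1$-error $\|y_{\tilde{P}_1 P_0(z)} - y_{\tilde{P}_1 P_0(z),h}\|_{pw}$ by quasi-optimality of the CR system against the energy norm of $y_{\tilde{P}_1 P_0(z)}$, which in turn is controlled by $\|\tilde{P}_1 P_0(z)\|_{H^{1/2}(\Gamma)}$, and then invoke $\|\tilde{P}_1 P_0(z)\|_{H^{1/2}(\Gamma)} \lesssim \|z\|_{H^{1/2}(\Gamma)}$---exactly the computation at the close of the proof of Theorem \ref{aux1} (split about $\pi_1 z$, the uniform operator bound \eqref{unifbdd1}, an inverse inequality, and $L_2$-stability of $P_0$). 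Summing the three pieces yields the claim.

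The main obstacle is the first piece: one has to treat $\partial_n\phi$ carefully at polygonal corners, where it belongs only to the edgewise space $\tilde{H}^{1/2}(\Gamma)$ rather than to $H^{1/2}(\Gamma)$, and to combine the local $h^{1/2}$ projection rates on each edge in a way that leaves the global $H^{1/2}(\Gamma)$-norm of $z$ on the right-hand side.
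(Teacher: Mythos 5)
Your proposal is correct and follows essentially the same route as the paper: the same three-term splitting (the paper reaches it via two successive triangle inequalities in \eqref{hh:3} and \eqref{4.28}), the same duality arguments for $\|y_z-y_{P_0z}\|$ and $\|y_{P_0z}-y_{\tilde P_1P_0z}\|$ (the paper re-derives the latter inline rather than citing Theorem \ref{aux1}, but the computation is identical), the Corollary to Theorem \ref{lem2} for the discrete piece, and the stability bound \eqref{l2est3} to return to $\|z\|_{H^{1/2}(\Gamma)}$. No substantive differences.
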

\begin{proof}
	By triangle inequality, we have
	\begin{align}\label{hh:3}
	\|y_{z}-y_{P_0(z),h}\|\leq \|y_{z}-y_{\tilde{P}_1(P_0(z))}\|+\|y_{\tilde{P}_1(P_0(z))}-y_{P_0(z),h}\|.
	\end{align}
Consider $\|y_z-y_{\tilde{P}_1(P_0z)}\|$.
\begin{align}\label{4.28}
   \|y_z-y_{\tilde{P}_1(P_0z)}\|\leq \|y_z-y_{P_0z}\|+\|y_{P_0z}-y_{\tilde{P}_1(P_0z)}\|. 
\end{align}
Consider $\|y_{P_0z}-y_{\tilde{P}_1(P_0z)}\|$.
We write the $L_2$ norm in duality form 
\begin{align}\label{4.29}
   \|y_{P_0z}-y_{\tilde{P}_1(P_0z)}\|=\sup_{g\in L_2(\Omega), g\neq 0}\dfrac{(y_{P_0z}-y_{\tilde{P}_1(P_0z)},g)}{\|g\|} 
\end{align}
Consider the auxiliary problem 
\begin{align}\label{4.30}
    -\Delta\phi&=g~\text{in}~\Omega,\\
    \phi&=0~\text{on}~\partial\Omega.\notag
\end{align}
Consider $(y_{P_0z}-y_{\tilde{P}_1(P_0z)},g)$. Combining \eqref{4.29}, \eqref{4.30} and integration by parts we obtain 
\begin{align}\label{4.31}
   (y_{P_0z}-y_{\tilde{P}_1(P_0z)},g)&=(y_{\tilde{P}_1(P_0z)}-y_{P_0z},\Delta\phi)\notag\\
   &=(P_0z-\tilde{P}_1(P_0z),\dfrac{\partial\phi}{\partial n})\notag\\
&=(P_0\tilde{P}_1(P_0z))-\tilde{P}_1(P_0z),\dfrac{\partial\phi}{\partial n})\notag\\
&=(P_0(\tilde{P}_1P_0z)-\tilde{P}_1P_0z,\dfrac{\partial\phi}{\partial n}-P_0\dfrac{\partial\phi}{\partial n})\notag\\
&\lesssim h\|\tilde{P}_1P_0z\|_{H^{\frac{1}{2}}(\partial\Omega)}\|g\|
\end{align}
Therefore combining \eqref{4.29}, \eqref{4.31} we obtain
\begin{align}\label{4.32}
  \|y_{P_0z}-y_{\tilde{P}_1(P_0z)}\|\lesssim h\|\tilde{P}_1P_0z\|_{H^{\frac{1}{2}}(\partial\Omega)}. 
\end{align}
Next consider $\|y_z-y_{P_0z}\|$. Applying duality arguments as above we obtain
\begin{align}\label{4.33}
    \|y_z-y_{P_0z}\|\lesssim h\|z\|_{H^{\frac{1}{2}}(\partial\Omega)}.
\end{align}
Applying Corollary 1 we obtain
\begin{align}\label{4.34}
    \|y_{\tilde{P}_1(P_0(z))}-y_{P_0(z),h}\|\lesssim h\|\tilde{P}_1P_0(z)\|_{H^{\frac{1}{2}}(\partial\Omega)}.
\end{align}
Combining \eqref{l2est3}, \eqref{hh:3}, \eqref{4.28}, \eqref{4.31}, \eqref{4.32}, \eqref{4.33}, \eqref{4.34}  we obtain the result.
\end{proof}

\section{Discrete Control Problem and Error Estimate}
In this section the fully discrete control problem is introduced. The control space is discretized by piecewise constant functions. Define the discrete control problem by:
\begin{align}
J_{h}(y_{h},u_{h})=\dfrac{1}{2}\|y_{h}-y_{d}\|^{2}+\dfrac{\alpha}{2}\|\tilde{P}_1(u_h)\|^{2},
\end{align}
subject to,
\begin{align}
&{y}_h={y}_{f,h}+{y}_{{u}_h,h}, \quad {y}_{f,h} \in V_h^0,~ {y}_{{u}_h,h}={y}_{0,h}+\tilde{P_{1}}({u}_h) \in V_h,\\
&a_{pw}({y}_{f,h}, v_h)=(f,v_h) \quad \forall ~ v_h \in V_h^0,\\
&a_{pw}({y}_{0,h}, v_h)=-a_{pw}(\widetilde{\tilde{P_{1}}({u}_h)}, v_h) \quad \forall ~ v_h \in V_h^0.
\end{align}
The existence and uniqueness of the solution of the problem follows from finite dimensional and strict convexity arguments.  The optimal variables $(\bar{u}_{h}, \bar{y}_{h})\in U_{h, ad}\times V_{h}$ satisfies the following system

\begin{theorem}
There exists a unique solution $(\bar{y}_h, \bar{u}_h)\in V_h\times U_{h,ad}$ of the discrete optimal control problem. Furthermore, it satisfy the following system of equations:
\begin{align}
&\bar{y}_h=\bar{y}_{f,h}+\bar{y}_{\bar{u}_h,h}, \quad \bar{y}_{f,h} \in V_h^0,~ \bar{y}_{\bar{u}_h,h}=\bar{y}_{0,h}+\tilde{P_{1}}(\bar{u}_h) \in V_h,\label{diskkt:1}\\
&a_{pw}(\bar{y}_{f,h}, v_h)=(f,v_h) \quad \forall ~ v_h \in V_h^0,\label{diskkt:2}\\
&a_{pw}(\bar{y}_{0,h}, v_h)=-a_{pw}(\widetilde{\tilde{P_{1}}(\bar{u}_h)}, v_h) \quad \forall ~ v_h \in V_h^0,\label{diskkt:3}\\
&(\bar{y}_h-y_d,y_{p_h,h}-\bar{y}_{\bar{u}_h,h})+\alpha\langle \tilde{P}_{1}(\bar{u}_h),\tilde{P}_{1}(p_h)-\tilde{P}_1(\bar{u}_h)\rangle\geq 0\quad \forall ~ p_h \in U_{h,ad}.\label{diskkt:4}
\end{align}
\end{theorem}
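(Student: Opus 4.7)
The plan is to mirror the continuous-case argument: reduce the optimization to a finite-dimensional problem in $u_h$ alone, establish strict convexity to obtain a unique minimizer, and then read off the optimality system from the first-order variational inequality.

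First, I would verify that the state equations \eqref{diskkt:2}--\eqref{diskkt:3} uniquely determine the state components for each fixed $u_h \in U_h$. The bilinear form $a_{pw}(\cdot,\cdot)$ is coercive on $V_h^0$ (where $\|\cdot\|_h$ is a genuine norm), so Lax--Milgram applied on the finite-dimensional space $V_h^0$ yields unique $y_{f,h} \in V_h^0$ and, given $u_h$, unique $y_{0,h}(u_h) \in V_h^0$. Since $\tilde{P}_1$ is linear and the right-hand side of the $y_{0,h}$ equation depends linearly on $u_h$, the discrete solution operator $S_h : U_h \to V_h$ defined by $S_h(u_h) := y_{f,h} + y_{0,h}(u_h) + \tilde{P}_1(u_h)$ is affine. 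Substituting yields a reduced cost functional $j_h(u_h) := J_h(S_h(u_h), u_h)$ on $U_h$.

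Next, I would establish existence and uniqueness. The admissible set $U_{h,ad}$ is a non-empty, closed, bounded, convex subset of the finite-dimensional space $U_h$ (the box constraint $u_a \le u_h \le u_b$ on each boundary edge). Since $S_h$ is affine, the tracking term $\tfrac{1}{2}\|S_h(u_h) - y_d\|^2$ is convex in $u_h$. For the regularization term, the key point is that by \eqref{asso opr}--\eqref{unifbdd1} the map $\tilde{P}_1 : U_h \to V_1(\Gamma)$ is a bijection, hence injective, so $u_h \mapsto \|\tilde{P}_1(u_h)\|$ defines a genuine norm on $U_h$ and the term $\tfrac{\alpha}{2}\|\tilde{P}_1(u_h)\|^2$ is strictly convex for $\alpha > 0$. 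Therefore $j_h$ is continuous and strictly convex on the compact convex set $U_{h,ad}$, and admits a unique minimizer $\bar{u}_h$. Setting $\bar{y}_h := S_h(\bar{u}_h)$ yields \eqref{diskkt:1}--\eqref{diskkt:3} by construction.

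Finally, I would derive \eqref{diskkt:4} from the first-order necessary condition $j_h'(\bar{u}_h)(p_h - \bar{u}_h) \ge 0$ for all $p_h \in U_{h,ad}$. Since $S_h$ is affine, its increment is
\[
S_h(p_h) - S_h(\bar{u}_h) = y_{p_h,h} - \bar{y}_{\bar{u}_h,h},
\]
where $y_{p_h,h}$ is the boundary-driven state component associated with the control $p_h$. Applying the chain rule to $j_h$ gives
\[
j_h'(\bar{u}_h)(p_h - \bar{u}_h) = (\bar{y}_h - y_d,\, y_{p_h,h} - \bar{y}_{\bar{u}_h,h}) + \alpha \langle \tilde{P}_1(\bar{u}_h),\, \tilde{P}_1(p_h) - \tilde{P}_1(\bar{u}_h)\rangle,
\]
which is precisely \eqref{diskkt:4}.

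The argument is essentially a routine finite-dimensional adaptation of the continuous proof. The only non-trivial ingredient is the strict convexity of the regularization term, which depends entirely on the injectivity of $\tilde{P}_1$; this in turn is guaranteed by the mild restriction on the triangulation (odd number of boundary edges) imposed in Section 4 that makes $P_0 : V_1(\Gamma) \to U_h$ invertible. Without that restriction the regularization would be only semi-definite in $u_h$ and uniqueness could fail, so this is the one structural step that deserves explicit mention.
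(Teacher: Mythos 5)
Your proposal is correct and follows exactly the route the paper intends: the paper itself only remarks that ``the existence and uniqueness of the solution of the problem follows from finite dimensional and strict convexity arguments'' and then states the optimality system, so your write-up is a faithful (and more detailed) elaboration of that same argument, including the one genuinely non-routine point that strict convexity of the regularization term rests on the injectivity of $\tilde{P}_1$ guaranteed by the odd-edge restriction on the triangulation.
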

Next we prove the following auxiliary estimate which is important to obtain the optimal order error estimate for the optimal control.
\begin{theorem}\label{sup-conv}
	For the discrete optimal control $\bar{u}_h$ the following error estimate holds:
	\begin{align*}
	\|y_{\bar{u}_h}-y_{\bar{u}_h,h}\|\lesssim h
	\end{align*}
\end{theorem}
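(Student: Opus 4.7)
The plan is to combine Theorem~\ref{supconv8}, applied with $u_h=\bar{u}_h$, with a uniform-in-$h$ bound on the enriched discrete control $\tilde{P}_1(\bar{u}_h)$ in $H^{1/2}(\Gamma)$. Inspecting \eqref{diskkt:1}--\eqref{diskkt:3} shows that $\bar{y}_{\bar{u}_h,h}$ is precisely the Crouzeix--Raviart approximation, in the sense of \eqref{cc:3}--\eqref{cc:4}, of the harmonic extension $y_{\tilde{P}_1(\bar{u}_h)}$ of $\tilde{P}_1(\bar{u}_h)\in V_1(\Gamma)$. Splitting the error through this intermediate state,
\begin{align*}
\|y_{\bar{u}_h}-y_{\bar{u}_h,h}\|\leq \|y_{\bar{u}_h}-y_{\tilde{P}_1(\bar{u}_h)}\|+\|y_{\tilde{P}_1(\bar{u}_h)}-y_{\tilde{P}_1(\bar{u}_h),h}\|.
\end{align*}
Theorem~\ref{lem:1} bounds the first summand by $h\,\|\tilde{P}_1(\bar{u}_h)\|_{H^{1/2}(\Gamma)}$, and the Corollary to Theorem~\ref{lem2}, applied with $z_h=\tilde{P}_1(\bar{u}_h)$, yields the same bound for the second summand, once the piecewise energy-norm error $\|y_{z_h}-y_{z_h,h}\|_{pw}$ appearing on its right-hand side is absorbed via \eqref{BestErEs} and Lemma~\ref{lem:CRinterest}. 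This reduces the problem to establishing
\begin{align*}
\|\tilde{P}_1(\bar{u}_h)\|_{H^{1/2}(\Gamma)}\lesssim 1 \qquad\text{uniformly in }h.
\end{align*}

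This uniform bound is the main obstacle, and it is exactly the ``energy-norm boundedness of the enriched discrete optimal control'' announced in the abstract. I would derive it by testing the discrete variational inequality \eqref{diskkt:4} against the admissible competitor $p_h=P_0\bar{u}\in U_{h,ad}$---which is admissible because $\bar{u}\in H^{1/2}(\Gamma)$ satisfies the pointwise bounds by \eqref{ctskkt:6}---and combining the resulting energy-type inequality with the continuous optimality system, the operator-norm bound \eqref{unifbdd1}, and the regularity $\bar{u}\in H^{1/2}(\Gamma)$. The delicate step is upgrading the ensuing $L_2$-control of $\tilde{P}_1(\bar{u}_h)$ into an $H^{1/2}$-control: a generic inverse estimate on $V_1(\Gamma)$ would lose a factor $h^{-1/2}$, so the odd-edge structural assumption on $\mathcal{T}_h$ that makes \eqref{asso opr} well defined with uniformly invertible bidiagonal matrix must be exploited in the spirit of \eqref{l2est3}, transferring the smoothness of $P_0\bar{u}$ back to $\tilde{P}_1 P_0\bar{u}$. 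Once the uniform $H^{1/2}$-bound is in place, the claimed estimate $\|y_{\bar{u}_h}-y_{\bar{u}_h,h}\|\lesssim h$ follows at once from the two-term split above.
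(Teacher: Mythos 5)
Your reduction of the theorem is exactly the paper's: the split through the intermediate state $y_{\tilde{P}_1(\bar{u}_h)}$, with Theorem~\ref{lem:1} for the first summand and Theorem~\ref{lem2} (via its Corollary) for the second, correctly isolates the real content as the uniform bound $\|\tilde{P}_1(\bar{u}_h)\|_{H^{1/2}(\Gamma)}\lesssim 1$. Where you diverge is in how you propose to obtain that bound, and this is where there is a genuine gap. Testing \eqref{diskkt:4} with $p_h=P_0\bar{u}$ produces, at best, $L_2$-type control of $\tilde{P}_1\bar{u}_h-\tilde{P}_1P_0\bar{u}$. You correctly identify that a raw inverse estimate then loses $h^{-1/2}$, but the fix you gesture at does not close the argument: \eqref{l2est3} bounds $\|\tilde{P}_1P_0\bar{u}\|_{H^{1/2}(\Gamma)}$ by $\|\bar{u}\|_{H^{1/2}(\Gamma)}$, which says nothing about $\tilde{P}_1\bar{u}_h$ itself. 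To transfer that smoothness to $\tilde{P}_1\bar{u}_h$ through an inverse estimate you would need $\|\tilde{P}_1\bar{u}_h-\tilde{P}_1P_0\bar{u}\|_{L_2(\Gamma)}\lesssim h^{1/2}$ --- but that is precisely estimate \eqref{cntest14} of the final control-error theorem, whose proof uses the present Theorem~\ref{sup-conv} through \eqref{cntest11}. As sketched, your route is therefore circular (or, if you only use the trivially available $O(1)$ bound on $\|\tilde{P}_1\bar{u}_h-\tilde{P}_1P_0\bar{u}\|_{L_2(\Gamma)}$, it yields only $\|\tilde{P}_1\bar{u}_h\|_{H^{1/2}(\Gamma)}\lesssim h^{-1/2}$, which is not enough).

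The paper escapes this by a different mechanism that never compares $\bar{u}_h$ with $P_0\bar{u}$. It introduces a discrete adjoint flux $\bar{\theta}_h\in V_1(\Gamma)$ defined through \eqref{adjest1}, proves the error estimate $\|\partial_n\theta_{y_{\bar{u}_h}}-\bar{\theta}_h\|_{L_2(\Gamma)}\lesssim \sqrt{h}$ by a discrete harmonic lifting argument \eqref{adjest6}--\eqref{adjest7}, and upgrades this to the uniform bound $\|\bar{\theta}_h\|_{H^{1/2}(\Gamma)}\leq C$ via triangle inequality, interpolation estimates and an inverse inequality (here the $\sqrt{h}$ rate is exactly what the inverse estimate consumes). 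It then rewrites the discrete variational inequality \eqref{dvie} as the projection identity $\tilde{P}_1\bar{u}_h=P_{\tilde{P}_1(U_{h,ad})}\bigl(\frac{1}{\alpha}\bar{\theta}_h\bigr)$ and invokes the $H^{1/2}(\Gamma)$-stability of the projection onto the convex set $\tilde{P}_1(U_{h,ad})$ to conclude \eqref{unifbdd3}. If you want to complete your proposal, this adjoint-based detour (or an equivalent bootstrap) is the missing idea; the competitor $p_h=P_0\bar{u}$ belongs to the proof of the final control estimate, not to this lemma.
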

\begin{proof}
From Theorem \ref{lem:1} we obtain
	\begin{align}\label{c::8}
	\|y_{\bar{u}_h}-y_{\tilde{P}_{1}(\bar{u}_h)}\|&\lesssim h\|\tilde{P}_{1}(\bar{u}_h)\|_{H^{\frac{1}{2}}(\partial\Omega)}.
	\end{align}
	Next it remains to find an uniform bound for $\|\tilde{P}_{1}(\bar{u}_h)\|_{H^{\frac{1}{2}}(\partial\Omega)}$.
	Let $\xi_{h}\in V_h$ satisfies 
	\begin{align}
	a_{pw}(\xi_h,v_h)=(\bar{y}_h-y_d,v_h),\forall v_h\in V_h
	\end{align}
	Then note that \eqref{diskkt:4} is equivalent to 
	\begin{align}\label{dvie}
	(\bar{\theta}_h+\alpha \tilde{P}_1\bar{u}_h,\tilde{P}_1p_h-\tilde{P}_1\bar{u}_h)\geq  0,\forall p_h\in U_{ad},
	\end{align}
	where for any $r_h\in V_1(\partial\Omega)$, $\bar{\theta}_h\in V_1(\partial\Omega)$ is defined by
	\begin{align}\label{adjest1}
	(\bar{\theta}_h,r_h)=a_{pw}(\bar{\xi}_h,\tilde{r}_h)-(\bar{y}_h-y_d,\tilde{r}_h),
	\end{align}
	where $\tilde{r}_h\in V_h$ is a piecewise linear, globally continuous extension of $r_h$ in $\Omega$.
	Next we prove 
	\begin{align}\label{adjest2}
	\|\dfrac{\partial\theta_{y_{\bar{u}_h}}}{\partial n}-\bar{\theta}_h\|\lesssim \sqrt{h}.
	\end{align}
	Where $\theta_{y_{\bar{u}_h}}\in H^1_0(\Omega)$ satisfies
	\begin{align}\label{adjest3}
	-\Delta\theta_{y_{\bar{u}_h}}&=y_{\bar{u}_h}-y_d \;\text{in}\;\Omega,\\
	\theta_{y_{\bar{u}_h}}&=0\;\text{on}\;\partial\Omega.\notag
	\end{align}
	Note that 
	\begin{align}\label{adjest4}
	\|\dfrac{\partial\theta_{y_{\bar{u}_h}}}{\partial n}-\bar{\theta}_h\|^2\lesssim\|\dfrac{\partial\theta_{y_{\bar{u}_h}}}{\partial n}-\tilde{P}_1P_0(\dfrac{\partial\theta_{y_{\bar{u}_h}}}{\partial n})\|^2+\|\tilde{P}_1P_0(\dfrac{\partial\theta_{y_{\bar{u}_h}}}{\partial n})-\bar{\theta}_h\|^2
	\end{align}
 Also  
 \begin{align}\label{adjest4.6}
     \|\tilde{P}_1P_0(\dfrac{\partial\theta_{y_{\bar{u}_h}}}{\partial n})-\bar{\theta}_h\|^2\lesssim      \|\tilde{P}_1P_0(\dfrac{\partial\theta_{y_{\bar{u}_h}}}{\partial n})-\dfrac{\partial\theta_{y_{\bar{u}_h}}}{\partial n}\|^2+\|\dfrac{\partial\theta_{y_{\bar{u}_h}}}{\partial n}-P_1\dfrac{\partial\theta_{y_{\bar{u}_h}}}{\partial n}\|^2+\|P_1\dfrac{\partial\theta_{y_{\bar{u}_h}}}{\partial n}-\bar{\theta}_h\|^2.
 \end{align}
	Consider,
	\begin{align}\label{adjest5}
	\|P_1(\dfrac{\partial\theta_{y_{\bar{u}_h}}}{\partial n})-\bar{\theta}_h\|^2=(\dfrac{\partial\theta_{y_{\bar{u}_h}}}{\partial n}-\bar{\theta}_h,P_1(\dfrac{\partial\theta_{y_{\bar{u}_h}}}{\partial n})-\bar{\theta}_h)
	\end{align}
	To estimate $\|\dfrac{\partial\theta_{y_{\bar{u}_h}}}{\partial n}-\bar{\theta}_h\|$ consider the following auxiliary problem:
	Find $z_h\in \tilde{V}_h$ such that
	\begin{align}\label{adjest6}
	 a_{pw}(z_h,v_h)&=0, \forall v_h\in V_h\\
	 z_h&=P_1(\dfrac{\partial\theta_{y_{\bar{u}_h}}}{\partial n})-\bar{\theta}_h\;\text{on}\;\partial\Omega.\notag
	\end{align} 
	Then from \eqref{adjest1}, \eqref{adjest6} and \eqref{adjest3} we obtain,
	\begin{align}\label{adjest7}
	\int_{\partial\Omega}\left(\dfrac{\partial\theta_{y_{\bar{u}_h}}}{\partial n}-\bar{\theta}_h\right)\left(P_1\left(\dfrac{\partial\theta_{y_{\bar{u}_h}}}{\partial n}\right)-\bar{\theta}_h\right)&=a_{pw}(\theta_{y_{\bar{u}_h}}-\xi_h,z_h)\notag\\
 &=a_{pw}(\theta_{y_{\bar{u}_h}}-I_{CR}\theta_{y_{\bar{u}_h}},z_h)\notag\\
	&\leq\|\theta_{y_{\bar{u}_h}}-I_{CR}\theta_{y_{\bar{u}_h}}\|_{pw}\|z_h\|_{pw}\notag\\&\lesssim h\|y_{\bar{u}_h}-y_d\|]\|z_h\|_{pw}\notag\\&
 \lesssim h\|\bar{u}_h\|_{H^{-\frac{1}{2}}(\partial\Omega)}\|P_1(\dfrac{\partial\theta_{y_{\bar{u}_h}}}{\partial n})-\bar{\theta}_h\|_{H^{\frac{1}{2}}(\partial\Omega)}\notag\\&
 \lesssim \sqrt{h}\|\bar{u}_h\|\|P_1(\dfrac{\partial\theta_{y_{\bar{u}_h}}}{\partial n})-\bar{\theta}_h\|
	\end{align}
 Combining \eqref{adjest5} and \eqref{adjest7} and applying the fact that $\|\bar{u}_h\|$ is uniformly bounded with respect to $h>0$ we obtain 
\begin{align}\label{adjest8}
    \|\dfrac{\partial\theta_{y_{\bar{u}_h}}}{\partial n}-\bar{\theta}_h\|\lesssim \sqrt{h}.
\end{align}
Hence application of triangle inequality, interpolation error estimates and inverse inequality implies that
\begin{align}\label{unifbdd2}
    \|\bar{\theta}_h\|_{H^{\frac{1}{2}}(\partial\Omega)}\leq C.
\end{align}
Note that \eqref{dvie} implies
\begin{align}
    \tilde{P}_1\bar{u}_h=P_{\tilde{P}_1(U_{h,ad})}(\dfrac{1}{\alpha}\bar{\theta}_h).
\end{align}
Also from \eqref{unifbdd1} it is evident that $\tilde{P}_1(U_{h,ad})$ is an uniformly bounded convex subset of $V_1(
\partial\Omega)$. Therefore by  the stability of $P_{\tilde{P}_1(U_{h,ad})}$ with respect to $H^{\frac{1}{2}}(\partial\Omega)$ norm \cite{casasmateosraymond:2009} we obtain that 
\begin{align}\label{unifbdd3}
    \|\tilde{P}_1\bar{u}_h\|_{H^{\frac{1}{2}}(\partial\Omega)}\leq C.
\end{align}
Combining \eqref{c::8} and \eqref{unifbdd3} the result follows. 
\end{proof}

\begin{theorem}
Suppose $\bar{u}\in U_{ad}$, $\bar{u}_h\in U_{h,ad}$ be the optimal control and discrete optimal control respectively. The following holds:
\begin{align*}
\|\bar{u}-\bar{u}_h\|_{L_2(\Gamma)}\lesssim h^{1/2}.
\end{align*}
\end{theorem}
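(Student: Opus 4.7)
My strategy is to combine the strong convexity of the continuous reduced cost $j$ with the optimality of $\bar{u}_h$ for the discrete reduced cost $j_h$, using the intermediate test control $P_0\bar{u}\in U_{h,ad}$ to bridge the two worlds.

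First, since $j$ is strongly convex with modulus $\alpha$ in $L_2(\Gamma)$ and $\bar{u}_h\in U_{h,ad}\subset U_{ad}$, the continuous variational inequality \eqref{ctskkt:4} would give
\begin{align*}
\tfrac{\alpha}{2}\|\bar{u}-\bar{u}_h\|_{L_2(\Gamma)}^{2}\leq j(\bar{u}_h)-j(\bar{u}).
\end{align*}
Noting that $P_0\bar{u}\in U_{h,ad}$ (the $L_2$-projection preserves the pointwise bounds $u_a\leq\bar{u}\leq u_b$), discrete optimality yields $j_h(\bar{u}_h)\leq j_h(P_0\bar{u})$, so I would split
\begin{align*}
j(\bar{u}_h)-j(\bar{u})\leq [j(\bar{u}_h)-j_h(\bar{u}_h)]+[j_h(P_0\bar{u})-j(P_0\bar{u})]+[j(P_0\bar{u})-j(\bar{u})],
\end{align*}
and it suffices to show each of the three brackets is $O(h)$.

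For the first bracket, the state contribution reduces to the $L_2$ error $\|y_{\bar{u}_h}-\bar{y}_{\bar{u}_h,h}\|$, which is $O(h)$ by Theorem \ref{sup-conv}; the control mismatch $\bigl|\|\bar{u}_h\|^2-\|\tilde{P}_1\bar{u}_h\|^2\bigr|$ equals $\|\tilde{P}_1\bar{u}_h-\bar{u}_h\|^2$ by Pythagoras (since $P_0\tilde{P}_1\bar{u}_h=\bar{u}_h$), which in turn is $O(h)$ via an edgewise inverse inequality applied to the piecewise linear $\tilde{P}_1\bar{u}_h$ together with the uniform $H^{1/2}(\Gamma)$-bound on $\tilde{P}_1\bar{u}_h$ established inside the proof of Theorem \ref{sup-conv}. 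The second bracket is handled analogously: the state part is controlled by Theorems \ref{aux1}, \ref{lem2} and \ref{supconv2} (comparing $y_{P_0\bar{u}}$, $y_{\tilde{P}_1 P_0\bar{u}}$ and $y_{P_0\bar{u},h}$ in sequence), while the control mismatch equals $\|\tilde{P}_1 P_0\bar{u}-P_0\bar{u}\|^2$ by Pythagoras and is $O(h)$ thanks to the uniform $H^{1/2}$-boundedness of $\tilde{P}_1 P_0\bar{u}$ already shown inside the proof of Theorem \ref{aux1}. The third bracket is purely continuous: the state part is bounded by the duality estimate $\|y_{\bar{u}}-y_{P_0\bar{u}}\|\lesssim\|\bar{u}-P_0\bar{u}\|_{\tilde{H}^{-1/2}(\Gamma)}\lesssim h\|\bar{u}\|_{H^{1/2}(\Gamma)}$ obtained from Lemma \ref{lem:existence}, and the control part equals $\|\bar{u}-P_0\bar{u}\|^2\lesssim h\|\bar{u}\|^2_{H^{1/2}(\Gamma)}$ by the standard $L_2$-projection estimate for $H^{1/2}$ functions on $\Gamma$.

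Summing the three $O(h)$ bounds yields $\|\bar{u}-\bar{u}_h\|^2_{L_2(\Gamma)}\lesssim h$, hence $\|\bar{u}-\bar{u}_h\|_{L_2(\Gamma)}\lesssim h^{1/2}$. The principal technical obstacle is reconciling the $L_2(\Gamma)$-penalty on the control in the continuous cost with the $\|\tilde{P}_1(\cdot)\|_{L_2(\Gamma)}$-penalty in the discrete cost; controlling this mismatch is exactly what forces the use of the uniform $H^{1/2}$-bound on $\tilde{P}_1\bar{u}_h$ from Theorem \ref{sup-conv}, and together with the $O(h^{1/2})$ saturation of piecewise constants approximating an $H^{1/2}(\Gamma)$ function it caps the overall rate at $h^{1/2}$.
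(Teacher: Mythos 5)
Your argument is correct in outline, but it takes a genuinely different route from the paper. The paper never passes through the cost functional: it tests the discrete variational inequality \eqref{diskkt:4} with $p_h=P_0\bar{u}$ and the continuous one \eqref{ctskkt:4} with $u=\bar{u}_h$, plays the two off against each other to obtain $\|\tilde{P}_1\bar{u}_h-\tilde{P}_1P_0\bar{u}\|\lesssim h^{1/2}$, and only then assembles $\|\bar{u}-\bar{u}_h\|$ by a four-term triangle inequality using $\|\bar{u}-P_0\bar{u}\|$, $\|P_0\bar{u}-\tilde{P}_1P_0\bar{u}\|$ and $\|\bar{u}_h-\tilde{P}_1\bar{u}_h\|$. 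You instead exploit the exact quadratic expansion of $j$ together with $j'(\bar{u})(\bar{u}_h-\bar{u})\geq 0$ (valid since $U_{h,ad}\subset U_{ad}$) to get $\tfrac{\alpha}{2}\|\bar{u}-\bar{u}_h\|^2\leq j(\bar{u}_h)-j(\bar{u})$, and then sandwich with $j_h$ and the admissible comparison control $P_0\bar{u}$. Both routes consume exactly the same superconvergence inputs — Theorems \ref{aux1}, \ref{supconv2} and \ref{sup-conv}, the uniform $H^{1/2}(\Gamma)$ bounds \eqref{l2est3} and \eqref{unifbdd3}, and the $O(h^{1/2})$ saturation of $P_0$ on $H^{1/2}(\Gamma)$ — so neither is logically lighter. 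What your version buys is a cleaner endgame: strong convexity delivers $\|\bar{u}-\bar{u}_h\|$ directly, the two Pythagoras identities $\|\tilde{P}_1 v_h\|^2-\|v_h\|^2=\|\tilde{P}_1v_h-v_h\|^2$ make the control-penalty mismatch transparent (indeed the first and third brackets' control parts are nonpositive and could simply be dropped), and the final triangle-inequality bookkeeping of \eqref{cntest15}--\eqref{cntest16} disappears. The small price is that the quadratic state differences in each bracket require uniform a priori bounds on $\|\bar{y}_h-y_d\|$ and $\|y_f+y_{\bar{u}_h}-y_d\|$; these follow easily from $j_h(\bar{u}_h)\leq j_h(0)$ and the box constraints, but you should state them explicitly in a full write-up.
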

\begin{proof}
We begin by choosing $p_{h}=P_0\bar{u}$ in \eqref{diskkt:4} to obtain,

\begin{align}\label{cntest1}
    \alpha\|\tilde{P}_1\bar{u}_h-\tilde{P}_1P_0\bar{u}\|^2\leq& \alpha(\tilde{P}_1P_0\bar{u},\tilde{P}_1P_0\bar{u}-\tilde{P}_1\bar{u}_h)+(\bar{y}_h-y_d, y_{P_0\bar{u},h}-y_{\bar{u}_h,h})\notag\\=&\alpha(\tilde{P}_1P_0\bar{u}-P_0\bar{u},\tilde{P}_1P_0\bar{u}-\tilde{P}_1\bar{u}_h)+(P_0\bar{u},\tilde{P}_1P_0\bar{u}-\tilde{P}_1\bar{u}_h)+(\bar{y}_h-y_d, y_{P_0\bar{u},h}-y_{\bar{u}_h,h})\notag\\=&\alpha(\tilde{P}_1P_0\bar{u}-P_0(\tilde{P}_1P_0\bar{u}),\tilde{P}_1P_0\bar{u}-\tilde{P}_1\bar{u}_h)+\alpha(P_0\bar{u},\tilde{P}_1P_0\bar{u}-\tilde{P}_1\bar{u}_h)+\notag\\&(\bar{y}_h-\bar{y}, y_{P_0\bar{u},h}-y_{\bar{u}_h,h})+(\bar{y}-y_d, y_{P_0\bar{u},h}-y_{\bar{u}_h,h})
    \end{align}

Application of Young's inequality to \eqref{cntest1} and the fact that $P_0\tilde{P}_1=I_{U_h}$ yields,
\begin{align}\label{cntest2}
    (\alpha-\epsilon)\|\tilde{P}_1\bar{u}_h-\tilde{P}_1P_0\bar{u}\|^2\leq &\alpha(P_0\bar{u},\tilde{P}_1P_0\bar{u}-\tilde{P}_1\bar{u}_h)+(\bar{y}_h-\bar{y}, y_{P_0\bar{u},h}-y_{\bar{u}_h,h})+\notag\\&(\bar{y}-y_d, y_{P_0\bar{u},h}-y_{\bar{u}_h,h})+\|\tilde{P}_1P_0\bar{u}-P_0(\tilde{P}_1P_0\bar{u})\|^2\notag\\
    =&\alpha(P_0\bar{u},(\tilde{P}_1P_0\bar{u}-\tilde{P}_1\bar{u}_h)-P_0(\tilde{P}_1P_0\bar{u}-\tilde{P}_1\bar{u}_h))+(\bar{y}_h-\bar{y}, y_{P_0\bar{u},h}-y_{\bar{u}_h,h})+\notag\\&(\bar{y}-y_d, y_{P_0\bar{u},h}-y_{\bar{u}_h,h})+\|\tilde{P}_1P_0\bar{u}-P_0(\tilde{P}_1P_0\bar{u})\|^2+\alpha (P_0\bar{u}, P_0\bar{u}-\bar{u}_h)\notag\\
    =&(\bar{y}_h-\bar{y}, y_{P_0\bar{u},h}-y_{\bar{u}_h,h})+(\bar{y}-y_d, y_{P_0\bar{u},h}-y_{\bar{u}_h,h})+\notag\\&\frac{\alpha}{\epsilon}\|\tilde{P}_1P_0\bar{u}-P_0(\tilde{P}_1P_0\bar{u})\|^2+\alpha (P_0\bar{u}, P_0\bar{u}-\bar{u}_h)
    \end{align} 
From \eqref{cntest2} consider $(\bar{y}-y_d, y_{P_0\bar{u},h}-y_{\bar{u}_h,h})+\alpha (P_0\bar{u}, P_0\bar{u}-\bar{u}_h)$.
\begin{align}\label{cntest3}
    (\bar{y}-y_d, y_{P_0\bar{u},h}-y_{\bar{u}_h,h})+\alpha (P_0\bar{u}, P_0\bar{u}-\bar{u}_h)=&(\bar{y}-y_d, y_{P_0\bar{u},h}-y_{\bar{u}_h,h})+\notag\\&\alpha (P_0\bar{u}-\bar{u}, P_0\bar{u}-\bar{u}_h)+\alpha(\bar{u},P_0\bar{u}-\bar{u}_h)\notag\\=&(\bar{y}-y_d, y_{P_0\bar{u},h}-y_{\bar{u}_h,h})+\alpha(\bar{u},P_0\bar{u}-\bar{u}_h)\notag\\=&(\bar{y}-y_d, y_{P_0\bar{u},h}-y_{\bar{u}_h,h})+\alpha(\bar{u},P_0\bar{u}-\bar{u})+\alpha(\bar{u},\bar{u}-\bar{u}_h)\notag\\
    =&-\alpha\|\bar{u}-P_0\bar{u}\|^2+\alpha(P_0\bar{u},P_0\bar{u}-\bar{u})+\notag\\&(\bar{y}-y_d, y_{P_0\bar{u},h}-y_{\bar{u}_h,h})+\alpha(\bar{u},\bar{u}-\bar{u}_h)\notag\\
    \leq&(\bar{y}-y_d, y_{P_0\bar{u},h}-y_{\bar{u}_h,h})+\alpha(\bar{u},\bar{u}-\bar{u}_h)\notag\\
    =&\alpha(\bar{u},\bar{u}-\bar{u}_h)+(\bar{y}-y_d, y_{P_0\bar{u},h}-y_{P_0\bar{u}}+\notag\\&y_{P_0\bar{u}}-y_{\bar{u}}+y_{\bar{u}}-y_{\bar{u}_h}+y_{\bar{u}_h}-y_{\bar{u}_h,h}).
\end{align}
Combining \eqref{ctskkt:4} and \eqref{cntest3} we obtain
\begin{align}\label{cntest4}
        (\bar{y}-y_d, y_{P_0\bar{u},h}-y_{\bar{u}_h,h})+\alpha (P_0\bar{u}, P_0\bar{u}-\bar{u}_h)\leq &(\bar{y}-y_d, y_{P_0\bar{u},h}-y_{P_0\bar{u}})+\notag\\&(\bar{y}-y_d, y_{P_0\bar{u}}-y_{\bar{u}})+(\bar{y}-y_d, y_{\bar{u}_h}-y_{\bar{u}_h,h}).
\end{align}

Consider $(\bar{y}_h-\bar{y},y_{P_0\bar{u},h}-y_{\bar{u}_h,h})$ from \eqref{cntest2}.	
\begin{align}\label{cntest7}
(\bar{y}_h-\bar{y},y_{P_0\bar{u},h}-y_{\bar{u}_h,h})&=(y_{f,h}-y_f,y_{P_0\bar{u},h}-y_{\bar{u}_h,h})+(y_{\bar{u}_h,h}-y_{\bar{u}},y_{P_0\bar{u},h}-y_{\bar{u}_h,h})\notag\\
&\leq (y_{f,h}-y_f,y_{P_0\bar{u},h}-y_{\bar{u}_h,h})+(y_{P_0\bar{u},h}-y_{\bar{u}},y_{P_0\bar{u},h}-y_{\bar{u}_h,h}).
\end{align}	
From \eqref{cntest7} consider $\|y_{P_0\bar{u},h}-y_{\bar{u}_h,h}\|$.
\begin{align}\label{cntest8}
\|y_{P_0\bar{u},h}-y_{\bar{u}_h,h}\|\leq \|y_{P_0\bar{u},h}-y_{P_0\bar{u}}\|+\|y_{P_0\bar{u}}-y_{\bar{u}_h}\|+\|y_{\bar{u}_h}-y_{\bar{u}_h,h}\|
\end{align}	
From \eqref{cntest8} consider $\|y_{P_0\bar{u},h}-y_{P_0\bar{u}}\|.$	
\begin{align}\label{cntest9}
\|y_{P_0\bar{u},h}-y_{P_0\bar{u}}\|\leq \|y_{P_0\bar{u}}-y_{P_1(P_0\bar{u})}\|+\|y_{P_1(P_0\bar{u})}-y_{P_0\bar{u},h}\|.
\end{align}	
Combining \eqref{cntest9}, Theorem \ref{aux1}, Theorem \ref{supconv2} we obtain	
\begin{align}\label{cntest10}
\|y_{P_0\bar{u},h}-y_{P_0\bar{u}}\|\lesssim h\|\bar{u}\|_{H^{\frac{1}{2}}(\partial\Omega)}.
\end{align}	
Theorem \ref{sup-conv} yields,
\begin{align}\label{cntest11}
\|y_{\bar{u}_h,h}-y_{\bar{u}_h}\|\lesssim h.
\end{align}	
An application of ultra weak formulation yields that
\begin{align}\label{cntest11..5}
    \|y_{P_0\bar{u}}-y_{\bar{u}_h}\|\lesssim \|P_0\bar{u}-\bar{u}_h\|\lesssim C,
\end{align}
where $C>0$ is depends only upon $|u_a|,~|u_b|$, $|\Omega|$ and shape-regularity of the triangulation. Combining \eqref{cntest8}, \eqref{cntest9}, \eqref{cntest10}, \eqref{cntest11} and \eqref{cntest11..5} we obtain 
\begin{align}\label{cntest11..6}
    \|y_{P_0\bar{u},h}-y_{\bar{u}_h,h}\|\lesssim C
\end{align}
Therefore combining \eqref{cntest7} and \eqref{cntest11..6} one obtains
\begin{align}\label{cntest11..7}
    (\bar{y}_h-\bar{y},y_{P_0\bar{u},h}-y_{\bar{u}_h,h})&\leq(y_{f,h}-y_f,y_{P_0\bar{u},h}-y_{\bar{u}_h,h})+(y_{\bar{u}_h,h}-y_{\bar{u}},y_{P_0\bar{u},h}-y_{\bar{u}_h,h})\lesssim h,
\end{align}
where the generic constant is independent of the discretization parameter $h>0$ and depends only upon $\|\bar{u}\|_{H^{\frac{1}{2}}(\partial\Omega)}$, $\|f\|$, shape-regularity of the triangulation, $\Omega$.
Applying ultraweak formulation \eqref{ultra:weak} for $y_{\bar{u}-P_0\bar{u}}$ and duality argument we obtain 
\begin{align}\label{cntest12}
    \|y_{\bar{u}}-y_{P_0\bar{u}}\|\lesssim h\|\bar{u}\|_{H^{\frac{1}{2}}(\partial\Omega)}.
\end{align}
Hence combination of \eqref{cntest10}, \eqref{cntest12} and triangle inequality yields
\begin{align}\label{cntest13}
    \|y_{\bar{u}}-y_{P_0\bar{u},h}\|\lesssim h\|\bar{u}\|_{H^{\frac{1}{2}}(\partial\Omega)}.
\end{align}

Combining \eqref{cntest2}, \eqref{cntest3}, \eqref{cntest4}, \eqref{cntest7}, \eqref{cntest8}, \eqref{cntest9}, \eqref{cntest10}, \eqref{cntest11}, \eqref{cntest12}, \eqref{cntest13} we obtain 
\begin{align}
   \|\tilde{P}_1\bar{u}_h-\tilde{P}_1P_0\bar{u}\|\lesssim h^{1/2} \label{cntest14}
\end{align}
Note that
\begin{align}\label{cntest15}
    \|\bar{u}-\bar{u}_h\|\leq \|\bar{u}-P_0\bar{u}\|+\|P_0\bar{u}-\tilde{P}_1P_0\bar{u}\|+\|\tilde{P}_1\bar{u}_h-\tilde{P}_1P_0\bar{u}\|+\|\bar{u}_h-\tilde{P}_1\bar{u}_h\|.
\end{align}
Note that 
\begin{align}\label{cntest16}
    \|\bar{u}_h-\tilde{P}_1\bar{u}_h\|=\|P_0(\tilde{P}_1\bar{u}_h)-\tilde{P}_1\bar{u}_h\|\lesssim h^{1/2}\|\tilde{P}_1\bar{u}_h\|.
\end{align}
 We conclude our proof with the help of \eqref{cntest15}, \eqref{cntest14}, \eqref{cntest16} \eqref{l2est3} and \eqref{unifbdd3}.
 \end{proof}

\newpage

\end{document}